\newtheorem{definition}{\bfseries Definition}
\newtheorem{proposition}{\bfseries Proposition}
\newtheorem{example}{\bfseries Example}
\newtheorem{theorem}{\bfseries Theorem}
\newtheorem{lemma}{\bfseries Lemma}
\newtheorem{remark}{\bfseries Remark}
\newtheorem{program}{\bfseries Program}
\def\A{\mathcal{A}}
\def\d{\delta}
\def\M{\mathcal{M}}
\newcommand{\R}{{\mathbb R}}
\newcommand{\Z}{{\mathbb Z}}
\newif\ifdraft
\title{\LARGE \bf
Finding Matrix Sequences with a High Asymptotic Growth Rate for Linear Constrained Switching Systems} 
\author{Yuhao Zhang, Xiangru Xu
\thanks{Yuhao Zhang and Xiangru Xu are with the Department of Mechanical Engineering, University of Wisconsin-Madison,
        Madison, WI, USA. Email: 
        {\tt\small \{yuhao.zhang2,xiangru.xu\}@wisc.edu}}%
}
\begin{document}
\maketitle


\begin{abstract}
Linear constrained switching systems are linear switched systems whose switching sequences are constrained by a deterministic finite automaton. This work investigates how to generate a sequence of matrices with an asymptotic growth rate  close to the constrained joint spectral radius (CJSR) for constrained switching systems, based on our previous result that reveals the equivalence of a constrained switching system and a lifted arbitrary switching system.
By using the dual solution of a sum-of-squares optimization program, an algorithm is designed for the lifted arbitrary switching system to produce a sequence of matrices with an asymptotic growth rate that is close to the CJSR of the original constrained switching system. It is also shown that a type of existing algorithms designed for arbitrary switching systems can be applied to the lifted system such that the desired sequence of matrices can be generated for the constrained switching system. Several numerical examples are provided to illustrate the better performance of the proposed algorithms compared with existing ones. 


\end{abstract}

\section{Introduction}\label{sec:intro}

A switched system is a dynamical system that consists of a set of subsystems and a logical rule that orchestrates switching between these subsystems \cite{lin2009stability,liberzon2012switching}. 
The discrete-time linear switched system associated with a finite set of matrices $\mathcal{A}=\{A_1, A_2, ..., A_m\} \subset \R^{n\times n}$ can be modeled as 
\begin{align}\label{eqswitch}
x_{k+1}=A_{\sigma_k}x_k
\end{align}
where $\sigma_k\in[m]:=\{1, 2, ..., m\}$ is the switching mode of the system and $x_k\in\R^n$ is the state at time $k \in \mathbb{Z}_{\geq 0}$. The system (\ref{eqswitch}) is called an \emph{arbitrary switching system} and denoted by $S(\mathcal{A})$ as there is no constraint on the switching sequence. The \emph{joint spectral radius} (JSR), which characterizes the maximal asymptotic growth rate of an infinite
product of matrices of the set $\mathcal{A}$, was firstly introduced in \cite{rota1960note} by Rota and Strang, 
and has been studied for decades because of its wide application in number theory, computer network,  wavelet functions, and signal processing \cite{jungers2009joint}. 
In particular, the value of JSR of \eqref{eqswitch} is related to the stability of  \eqref{eqswitch} since \eqref{eqswitch} is stable if and only if its JSR is smaller than 1 (see Corollary 1.1 in \cite{jungers2009joint}). There have been lots of algorithms (e.g., Gripenberg, conitope, and sum-of-squares (SOS)) proposed in the past decades  to approximate the value of JSR  (see, e.g., \cite{jungers2009joint}).

In this work, we consider the switched system \eqref{eqswitch} whose switching sequences are constrained by a deterministic finite automaton (DFA) $\mathcal{M}$. We refer to such a switched system as the \emph{constrained switching system}, denoted as $S(\mathcal{A},\mathcal{M})$. Similar to the concept of JSR that applies to the arbitrary switching system, the \emph{constrained joint spectral radius} (CJSR) characterizes the stability of the constrained switching system \cite{dai2012gel}. The approximation of CJSR, however,  is much more difficult than that of JSR. In \cite{philippe2016stability},  Philippe et al. propose a semi-definite programming based method
to approximate CJSR, where the T-product lift and the M-path dependent lift methods are used to improve the approximation accuracy; in \cite{xu2018approximation}, Xu and Behcet propose a unified matrix-based formulation for arbitrary and constrained switching systems and prove that the CJSR of a constrained switching system is equivalent to the JSR of a \emph{lifted} arbitrary switching system, such that the approximation of CJSR can be reduced to the approximation of JSR for which many off-the-shelf algorithms exist; see also \cite{wang2017stability,kozyakin2014berger} and references therein.

This work investigates how to generate a sequence of matrices with an asymptotic growth rate close to the CJSR based on the lift-based approach in \cite{xu2018approximation}. Finding such sequences is useful in various applications, such as providing a more accurate lower bound for the CJSR and testing the stability of linear switched systems \cite{legat2020certifying}. There are a few existing algorithms that can be utilized to produce the optimal asymptotic growth rate sequences for arbitrary switching systems: in \cite{gripenberg1996computing}, Gripenberg proposes a branch-and-bound algorithm that improves the search by a priori fixed absolute error; other branch-and-bound algorithms include the complex polytope algorithm in \cite{guglielmi2008algorithm} and the conitope method in \cite{jungers2014lifted}. All of the above algorithms can only deal with arbitrary switching systems.  For constrained switching systems, the only existing result known to us is \cite{legat2020certifying} (\cite{legat2016generating} is a preliminary version of \cite{legat2020certifying}), where Legat et al. propose to use the dual solution of a SOS program for generating the desired switching  sequences. 

The contribution of this work is at least twofold. First, we  propose a novel lift-based algorithm for finding a sequence of matrices with an  asymptotic growth rate close to the CJSR of linear constrained switching systems; 
we prove that the switching sequences generated by our algorithm are always accepted by the constraining DFA, and the asymptotic growth rate can be made arbitrarily close to the CJSR. 
Second, we prove that the lift-based approach proposed in \cite{xu2018approximation} enables one to leverage a type of existing algorithms that are designed for arbitrary switching systems to generate high-growth sequences for constrained switching systems. Additionally,  through several numerical examples we show that the lift-based algorithms can  generate switching sequences with a success rate higher than that in \cite{legat2020certifying}. The remainder of the paper is organized as follows. Section \ref{sec:stp} introduces the formal definitions of JSR and CJSR and the equivalence between them under the lift-based framework. Section \ref{sec:main} presents the sequence generation algorithm, the theoretical bound of matrix products generated by the algorithm, the extendability to other existing algorithms, and several numerical examples.  Section \ref{sec:conclusion} provides some concluding remarks.

\section{Preliminaries}\label{sec:stp}

\subsection{JSR \& CJSR}

Given a finite set of matrices $\mathcal{A}=\{A_1, A_2, ..., A_m\} \subset \R^{n\times n}$ and a switching sequence $\sigma=\sigma_1\dots\sigma_{k}$  with $\sigma_1,\dots,\sigma_{k}\in[m]$ where $[m]:=\{1, 2, ..., m\}$, we define 
\begin{align}
A_\sigma:=A_{\sigma_{k}}\dots A_{\sigma_1}.\label{sigorder}
\end{align}

\begin{definition}\cite{jungers2009joint}
The JSR of an arbitrary switching system $S(\mathcal{A})$ is defined as $\rho(\mathcal{A})=\limsup_{k\rightarrow\infty}\rho_k(\A)^{1/k}$ 
where $\rho_k(\A)=\max_{\sigma\in[m]^k}\|A_\sigma\|$, and $\|\cdot\|$ is any given sub-multiplicative matrix norm  on $\R^{n\times n}$.
\end{definition}


In this work, the deterministic finite automaton   (DFA) is used to represent the constraints on the switching sequences. 

\begin{definition}
The DFA  $\mathcal{M}$ is a 3-tuple $(Q,U,f)$ consisting of a finite set of states $Q=\{q_1,q_2,\dots,q_\ell\}$,
a finite set of input symbols $U=\{1,2,\dots,m\}$ and a transition function $f: Q\times U\rightarrow Q$. 
\end{definition}

Note that the function $f$ in $\mathcal{M}$ may not be defined for all state-input pairs. For system \eqref{eqswitch}, we say a finite switching sequence $\sigma=\sigma_1...\sigma_k$ is \emph{accepted} by $\M$ or $\M$-accepted if $\sigma_1,...,\sigma_k\in U$ and there exists a finite state sequence $q_{j_1}q_{j_2}\dots q_{j_{k+1}}$ such that $q_{j_1},q_{j_2},\dots, q_{j_{k+1}}\in Q$ and $q_{j_{i+1}}=f(q_{j_i},\sigma_i)$ are defined for $i=1,...,k$; an infinite switching sequence accepted by $\M$ is defined similarly by taking $k=\infty$ \cite{philippe2016stability,eilenberg1974automata}. Denote the set of switching sequences accepted by $\mathcal{M}$ as $L(\mathcal{M})$. 


\begin{definition}\label{dfn:STM}\cite{xu2012matrix}
Given a DFA $\mathcal{M}=(Q,U,f)$ where $Q=\{q_1,\dots,q_\ell\},\quad U=\{1,\dots,m\}$, its 
\emph{transition structure matrix} is defined as
\begin{align}
F=[F_1\;F_2\;\dots\;F_m]\in \mathbb{R}^{\ell\times m\ell}\label{TSMF}
\end{align}
where $F_j\in \mathbb{R}^{\ell\times \ell}$ is defined as follows: for $j\in[m]$,  
\begin{equation}\label{structmatrix}
{F_j}_{(s,t)}=\begin{cases}
1,\quad\mbox{if} \;q_s= f(q_t,j);\\
0,\quad\mbox{otherwise.}
\end{cases}
\end{equation}
\end{definition}

\begin{lemma}\label{cor1} (Corollary 1 in \cite{xu2018approximation})
	Given a switching sequence $\sigma=\sigma_0\dots\sigma_{k-1}\in[m]^k$, $\sigma\in L(\mathcal{M})$ if and only if  $F_{\sigma_{k-1}}\dots F_{\sigma_0}\neq{\bf 0}$.  
\end{lemma}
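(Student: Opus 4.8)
The plan is to prove Lemma~\ref{cor1} by induction on the length $k$ of the switching sequence, exploiting the fact that the transition structure matrices $F_j$ are $0$-$1$ matrices encoding the graph of the transition function $f$. The key observation is that the $(s,t)$ entry of a product $F_{\sigma_{k-1}}\cdots F_{\sigma_0}$ counts (in fact, since the automaton is deterministic, is the indicator of) the existence of a path in $\mathcal{M}$ that starts at state $q_t$, reads the symbols $\sigma_0,\sigma_1,\dots,\sigma_{k-1}$ in order, and ends at state $q_s$. Once this combinatorial interpretation is established, the equivalence $\sigma\in L(\mathcal{M}) \Longleftrightarrow F_{\sigma_{k-1}}\cdots F_{\sigma_0}\neq \mathbf{0}$ is immediate, because $\sigma$ being $\mathcal{M}$-accepted means exactly that such a path exists for \emph{some} starting state $q_t$ and \emph{some} ending state $q_s$, i.e., that the product has at least one nonzero entry.

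**First I would** set up the induction. For the base case $k=1$, note that $(F_{\sigma_0})_{(s,t)}=1$ iff $q_s = f(q_t,\sigma_0)$, so $F_{\sigma_0}\neq\mathbf{0}$ iff $f(q_t,\sigma_0)$ is defined for at least one $t$, which is precisely the condition for the length-one sequence $\sigma_0$ to be $\mathcal{M}$-accepted. For the inductive step, assume the claimed entrywise interpretation holds for products of length $k-1$; writing $P := F_{\sigma_{k-2}}\cdots F_{\sigma_0}$, we have $(F_{\sigma_{k-1}} P)_{(s,t)} = \sum_{r} (F_{\sigma_{k-1}})_{(s,r)} P_{(r,t)}$. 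Since $(F_{\sigma_{k-1}})_{(s,r)}\in\{0,1\}$ picks out the unique $r$ (if any) with $q_s=f(q_r,\sigma_{k-1})$, and $P_{(r,t)}\in\{0,1\}$ by the inductive hypothesis records whether the prefix $\sigma_0\cdots\sigma_{k-2}$ drives $q_t$ to $q_r$, the sum is nonzero iff there is an intermediate state $q_r$ reachable from $q_t$ via the prefix and with $q_s=f(q_r,\sigma_{k-1})$ defined — i.e., iff the full sequence $\sigma_0\cdots\sigma_{k-1}$ drives $q_t$ to $q_s$. This also confirms entries stay in $\{0,1\}$, since determinism of $f$ makes at most one term in the sum nonzero.

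**Then I would** conclude: $F_{\sigma_{k-1}}\cdots F_{\sigma_0}\neq\mathbf{0}$ iff some entry $(s,t)$ is nonzero iff there is a state sequence $q_{j_1}\cdots q_{j_{k+1}}$ with $q_{j_1}=q_t$, $q_{j_{k+1}}=q_s$, and $q_{j_{i+1}}=f(q_{j_i},\sigma_{i-1})$ defined for all $i$ — which is the definition of $\sigma$ being accepted by $\mathcal{M}$.

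**The main obstacle**, such as it is, is bookkeeping rather than mathematical depth: one must be careful with the index conventions — the product $F_{\sigma_{k-1}}\cdots F_{\sigma_0}$ reads symbols left-to-right in time order but composes matrices right-to-left (consistent with the convention $A_\sigma = A_{\sigma_k}\cdots A_{\sigma_1}$ in \eqref{sigorder}), and the definition \eqref{structmatrix} places the \emph{target} state index $s$ in the row and the \emph{source} state index $t$ in the column. Keeping these straight through the induction, and invoking determinism of $\mathcal{M}$ at the right moment to ensure the matrix entries remain $0$-$1$ valued (so that "nonzero product" and "exists an accepting run" genuinely coincide), is the only thing that requires attention. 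Alternatively, one could cite this as a standard fact about adjacency-matrix products of labeled transition systems, but the short induction above makes the argument self-contained.
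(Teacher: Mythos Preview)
Your proof is correct. The induction on the length of the word, together with the observation that the $(s,t)$ entry of $F_{\sigma_{k-1}}\cdots F_{\sigma_0}$ indicates whether the run of $\mathcal{M}$ on $\sigma_0\cdots\sigma_{k-1}$ starting at $q_t$ is defined and ends at $q_s$, is exactly the right combinatorial reading, and your handling of the row/column and time-ordering conventions is accurate.

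Regarding comparison with the paper: the paper does not actually prove this lemma. It is stated as Lemma~\ref{cor1} with the attribution ``Corollary~1 in \cite{xu2018approximation}'' and is invoked as a black box in the proofs of Proposition~2 and Proposition~\ref{grip_prop}. So your argument is not an alternative to the paper's proof but rather a self-contained substitute for an external citation. One minor remark: the $\{0,1\}$-valuedness of the product entries, while true and pleasant, is not strictly needed for the equivalence in the lemma; since all entries of each $F_j$ are nonnegative, the product is nonzero iff some entry is positive iff at least one accepting run exists, regardless of whether that entry equals~$1$ or something larger. Determinism makes the interpretation cleaner but is not load-bearing for the ``iff'' you are asked to establish.
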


Formally, the \emph{constrained switching system} $S(\mathcal{A},\mathcal{M})$ is the linear switching system as shown in \eqref{eqswitch} where $A_i\in\mathcal{A}$ for $i\in[m]$ and the switching sequence $\sigma\in L(\mathcal{M})$ \cite{philippe2016stability}.

\begin{definition} \cite{philippe2016stability}   
The CJSR of a constrained switching system $S(\mathcal{A},\mathcal{M})$ is defined as 
$\rho(\mathcal{A}, \mathcal{M})=\limsup _{k \rightarrow \infty} \rho_{k}(\mathcal{A}, \mathcal{M})^{1 / k}$ 
where $\rho_{k}(\mathcal{A}, \mathcal{M})=\max _{\sigma \in[m]^{k}, \sigma \in L(\mathcal{M})}\left\|A_{\sigma}\right\|$,  
and $\|\cdot\|$ is any given sub-multiplicative matrix norm on $\R^{n\times n}$.
\end{definition}



	




\subsection{Equivalence between JSR and CJSR}

\begin{definition}\cite{horn1990matrix}
Given matrices $A=(a_{ij}) \in \mathbb{R}^{m \times n}$ and $B \in \mathbb{R}^{p \times q}$, the Kronecker product $A \otimes B$ is defined as
$$
A \otimes B:=\left(\begin{array}{ccc}
a_{11} B & \cdots & a_{1 n} B \\
\vdots & \ddots & \vdots \\
a_{m 1} B & \cdots & a_{m n} B
\end{array}\right).
$$
\end{definition}


The following two properties of Kronecker product will be used in later sections \cite{horn1990matrix,lancaster1972norms}:
\begin{itemize}
    \item Given matrices $A \in \mathbb{R}^{m_{A} \times n_{A}}, B \in \mathbb{R}^{m_{B} \times n_{B}}, C \in$
    $\mathbb{R}^{n_{A} \times n_{C}}, D \in \mathbb{R}^{n_{B} \times n_{D}},$ it holds that
    \begin{equation}\label{kron_prop1}
    (A \otimes B)(C \otimes D)=(A C) \otimes(B D)
    \end{equation}
    \item Given two matrices $A \in \mathbb{R}^{m_{A} \times n_{A}}$, $B \in \mathbb{R}^{m_{B} \times n_{B}}$ and any sub-multiplicative norm $\left\|\cdot\right\|$, it holds that
    \begin{equation}\label{kron_prop2}
    \|A \otimes B\|=\|A\|\|B\|
    \end{equation}
\end{itemize}

Given a system $S(\mathcal{A},\mathcal{M})$ where $\mathcal{A}=\{A_1, A_2, ..., A_m\} \subset \R^{n\times n}$ and the transition structure matrix, $F$, of $\mathcal{M}$ is defined in \eqref{TSMF}-\eqref{structmatrix}, we define a finite set of matrices
\begin{align}
\mathcal{A}_\M=\{\Phi_1,\dots,\Phi_m\}\label{AM}
\end{align}
where
	\begin{align}
	\Phi_i=F_i\otimes A_i\in \mathbb{R}^{n\ell\times n\ell},\;\forall i\in[m].\label{Phii}
	\end{align}

The arbitrary switching system $S(\mathcal{A}_\mathcal{M})$ can  be considered as a \emph{lifted system} of the constrained switching system $S(\mathcal{A},\mathcal{M})$. For more details on the relationship between $S(\mathcal{A}_\mathcal{M})$ and $S(\mathcal{A},\mathcal{M})$, we refer the reader to \cite{xu2018approximation}. The following result from \cite{xu2018approximation} reveals that the CJSR of $S(\mathcal{A},\mathcal{M})$ and the JSR of $S(\mathcal{A}_\mathcal{M})$ are equivalent.

\begin{lemma} (adopted from Theorem 2 in \cite{xu2018approximation})\label{cjsr=jsr_theorem}
The following equality holds: 
$\rho(\mathcal{A}, \mathcal{M})=\rho\left(\mathcal{A}_{\mathcal{M}}\right)$.
\end{lemma}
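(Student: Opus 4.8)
The plan is to show the two quantities $\rho(\mathcal{A},\mathcal{M})$ and $\rho(\mathcal{A}_{\mathcal{M}})$ coincide by comparing, for each length $k$, the relevant sets of matrix products and their norms. The key object is a product $\Phi_\sigma = \Phi_{\sigma_{k-1}}\cdots\Phi_{\sigma_0}$ for an arbitrary word $\sigma\in[m]^k$. Using the mixed-product rule \eqref{kron_prop1} repeatedly, I would first establish the identity
\begin{equation*}
\Phi_\sigma = (F_{\sigma_{k-1}}\cdots F_{\sigma_0})\otimes(A_{\sigma_{k-1}}\cdots A_{\sigma_0}) = (F_{\sigma_{k-1}}\cdots F_{\sigma_0})\otimes A_\sigma,
\end{equation*}
so that a product in the lifted system factors as a Kronecker product of the corresponding automaton-matrix product with the corresponding $\mathcal{A}$-product.

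Next I would split into the two cases governed by Lemma \ref{cor1}. If $\sigma\notin L(\mathcal{M})$, then $F_{\sigma_{k-1}}\cdots F_{\sigma_0}={\bf 0}$, hence $\Phi_\sigma = {\bf 0}$ and contributes nothing to $\rho_k(\mathcal{A}_\mathcal{M})=\max_{\sigma\in[m]^k}\|\Phi_\sigma\|$. If $\sigma\in L(\mathcal{M})$, then $F_{\sigma_{k-1}}\cdots F_{\sigma_0}\neq{\bf 0}$ and, by \eqref{kron_prop2}, $\|\Phi_\sigma\| = \|F_{\sigma_{k-1}}\cdots F_{\sigma_0}\|\,\|A_\sigma\|$. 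Therefore
\begin{equation*}
\rho_k(\mathcal{A}_\mathcal{M}) = \max_{\sigma\in[m]^k,\,\sigma\in L(\mathcal{M})}\|F_{\sigma_{k-1}}\cdots F_{\sigma_0}\|\,\|A_\sigma\|.
\end{equation*}
The remaining task is to control the factor $\|F_{\sigma_{k-1}}\cdots F_{\sigma_0}\|$. Since $F$ is a $0$–$1$ transition structure matrix of a \emph{deterministic} automaton, each $F_j$ has at most one $1$ per column, so any nonzero product $F_{\sigma_{k-1}}\cdots F_{\sigma_0}$ is again a $0$–$1$ matrix with at most one $1$ per column (it encodes the partial transition map of the word $\sigma$). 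Consequently its norm is bounded above and below by constants depending only on $\ell$ and the choice of norm — e.g. for the induced $2$-norm it equals $1$ whenever the product is nonzero — so there exist constants $0<c_1\le c_2$ (independent of $k$ and $\sigma$) with $c_1\le \|F_{\sigma_{k-1}}\cdots F_{\sigma_0}\|\le c_2$ for every $\mathcal{M}$-accepted $\sigma$. This sandwiches $\rho_k(\mathcal{A}_\mathcal{M})$ between $c_1\rho_k(\mathcal{A},\mathcal{M})$ and $c_2\rho_k(\mathcal{A},\mathcal{M})$.

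Finally I would take $k$-th roots and let $k\to\infty$: since $c_1^{1/k}\to 1$ and $c_2^{1/k}\to 1$, the $\limsup$ of $\rho_k(\mathcal{A}_\mathcal{M})^{1/k}$ equals the $\limsup$ of $\rho_k(\mathcal{A},\mathcal{M})^{1/k}$, which is exactly $\rho(\mathcal{A}_\mathcal{M}) = \rho(\mathcal{A},\mathcal{M})$. The main obstacle is the step bounding $\|F_{\sigma_{k-1}}\cdots F_{\sigma_0}\|$ uniformly in $k$: one must argue that nonzero products of the structure matrices stay in a fixed finite set (or at least a norm-bounded set bounded away from $0$), which is where determinism of $\mathcal{M}$ is essential — for a nondeterministic automaton the entries could grow and a more careful argument, or a different norm-equivalence estimate, would be needed. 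Everything else is a routine application of the two Kronecker-product identities together with Lemma \ref{cor1}.
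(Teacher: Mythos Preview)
The paper does not supply its own proof of this lemma; it simply cites Theorem~2 of \cite{xu2018approximation}. So there is nothing to compare against directly. Your argument is nonetheless the natural one, and it is essentially the computation the authors themselves carry out later in the proofs of Proposition~2 and Theorem~\ref{new_theorem}: factor $\Phi_\sigma$ via the mixed-product rule \eqref{kron_prop1}, invoke Lemma~\ref{cor1} to discard words not in $L(\mathcal{M})$, and use \eqref{kron_prop2} together with a uniform bound on $\|F_{\sigma_{k-1}}\cdots F_{\sigma_0}\|$ to squeeze $\rho_k(\mathcal{A}_\mathcal{M})$ against $\rho_k(\mathcal{A},\mathcal{M})$.

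One small slip: your parenthetical claim that ``for the induced $2$-norm it equals $1$ whenever the product is nonzero'' is not correct. A nonzero $0$--$1$ matrix with at most one $1$ per column can have several columns sharing the same nonzero row (two automaton states can transition to the same state under the same word), and then its spectral norm exceeds $1$; e.g.\ $\left(\begin{smallmatrix}1&1\\0&0\end{smallmatrix}\right)$ has $2$-norm $\sqrt{2}$. The correct bound, which the paper itself uses in the proof of Theorem~\ref{new_theorem}, is $1\le\|F_{\sigma_{k-1}}\cdots F_{\sigma_0}\|_2\le\sqrt{\ell}$ for any $\mathcal{M}$-accepted $\sigma$. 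This is still independent of $k$, so your sandwich argument with $c_1=1$, $c_2=\sqrt{\ell}$ (or the analogous constants for another norm) goes through unchanged, and the conclusion $\rho(\mathcal{A},\mathcal{M})=\rho(\mathcal{A}_\mathcal{M})$ follows after taking $k$-th roots and passing to the $\limsup$.
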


The importance of Lemma \ref{cjsr=jsr_theorem} lies in that it enables one to convert the problem of approximating the CJSR of $S(\mathcal{A},\mathcal{M})$  into the problem of approximating the JSR of its lifted system $S(\mathcal{A}_\M)$, for which many off-the-shelf algorithms exist. The simulation result in \cite{xu2018approximation} shows that the lift-based method can generate the CJSR with  significantly higher accuracy within a much shorter computational time.

\begin{example}\label{exp:0}
Consider the constrained switching system $S(\mathcal{A},\mathcal{M})$ given in  \cite{philippe2016stability} where  $\mathcal{A}=\{A_1,A_2,A_3,A_4\}$ with
$$
\begin{array}{ll}
A_{1}=\left(\begin{array}{cc}
0.94 & 0.56 \\
-0.35 & 0.73
\end{array}\right),\quad A_{2}=\left(\begin{array}{cc}
0.94 & 0.56 \\
0.14 & 0.73
\end{array}\right), \\
A_{3}=\left(\begin{array}{cc}
0.94 & 0.56 \\
-0.35 & 0.46
\end{array}\right),\quad A_{4}=\left(\begin{array}{cc}
0.94 & 0.56 \\
0.14 & 0.46
\end{array}\right),
\end{array}
$$
and the DFA $\mathcal{M}=(Q,U,f)$ is given by $Q=\{q_1,q_2,q_3,q_4\}$, $U=\{1,2,3,4\}$ with its transition map shown in Fig~\ref{fig:automaton1}. Using (\ref{structmatrix}) we can compute the transition structure matrix $F = [F_1\ F_2\ F_3\ F_4]$ where $F_{1}=[\delta_{4}^3,\delta_{4}^3,\delta_{4}^3,\delta_{4}^3]$, $F_{2}=[\delta_{4}^0,\delta_{4}^1,\delta_{4}^1,\delta_{4}^0]$, $F_{3}=[\delta_{4}^2,\delta_{4}^0,\delta_{4}^2,\delta_{4}^0]$, $F_{4}=[\delta_{4}^0,\delta_{4}^0,\delta_{4}^4,\delta_{4}^0]$. Here, $\d_n^k$ is the standard basis vector (i.e., a vector of dimension $n$ with 1 in the $k$th coordinate and 0's elsewhere for $k\in[n]$) and $\d_n^0$ is the zero vector of dimension $n$. We can calculate matrices $\Phi_{i}=F_{i} \otimes A_{i}$ for $i \in[4]$ and define the set of matrices $\mathcal{A}_{\mathcal{M}}=\left\{\Phi_{1}, \Phi_{2}, \Phi_{3}, \Phi_{4}\right\}$. 
Then by Lemma \ref{cjsr=jsr_theorem},  $\rho(\mathcal{A}, \mathcal{M})=\rho\left(\mathcal{A}_{\mathcal{M}}\right)$ holds.\hfill$\Box$
\begin{figure}[!ht]
    \centering
    \includegraphics[width=0.40\textwidth]{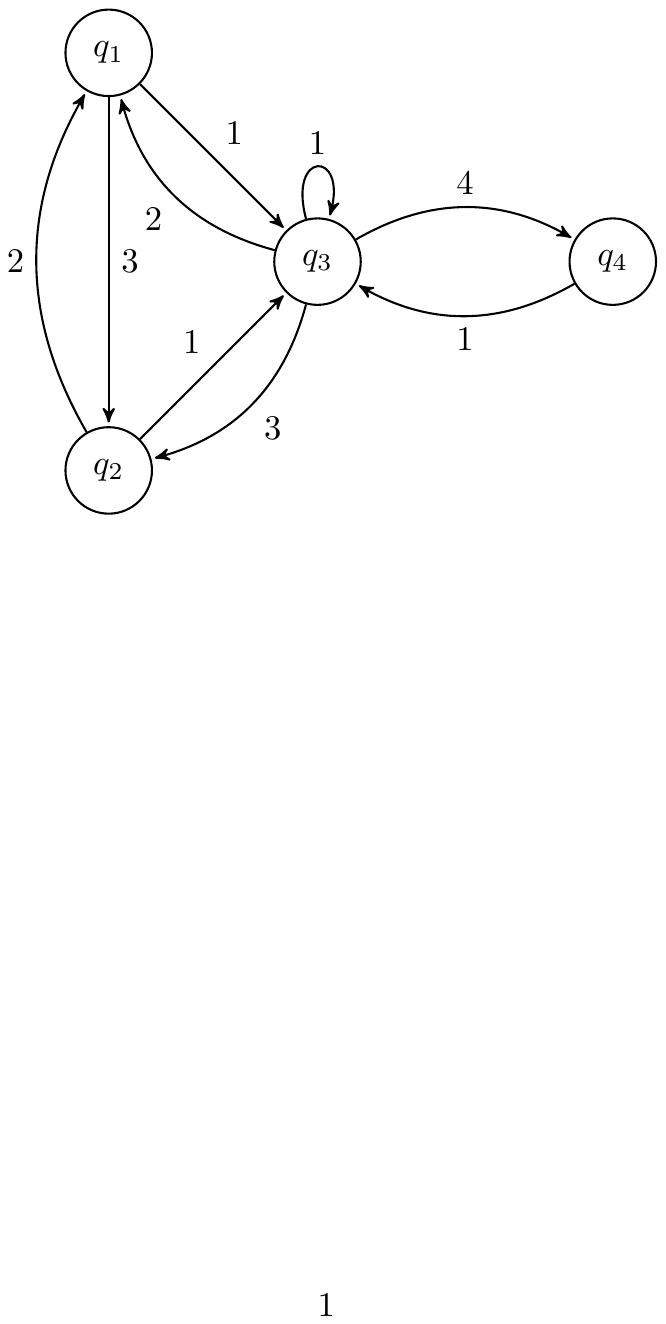}
    \caption{DFA $\mathcal{M}$ in Example~\ref{exp:0}.}
    \label{fig:automaton1}
\end{figure}
\end{example}

\section{Main Results}\label{sec:main}

\subsection{Dual SOS Program}
In this subsection, we will present a dual SOS program for the lifted arbitrary switching system $S(\mathcal{A}_\mathcal{M})$. Recall that the set $\mathcal{A}_\mathcal{M}$ is defined in \eqref{AM} with matrices $\Phi_i$ defined in \eqref{Phii}.


Based on Theorem 2.2 in \cite{parrilo2008approximation}, the following Program \ref{primal} provides a SOS-based algorithm for approximating $\rho(\mathcal{A}_\mathcal{M})$.

\begin{program}(Primal)\label{primal}
\begin{equation}
\begin{aligned}
 &\inf _{p(x) \in \mathbb{R}_{2 \mathrm{d}}[x],  \gamma \in \mathbb{R}}  \gamma \\
& \quad p(x) \text{ is  SOS}, \\
& \quad \gamma^{2 d} p(x)-p\left(\Phi_{i} x\right) \text{ is  SOS}, \quad \forall i \in [m],
\end{aligned}
\end{equation}
\end{program}
where 
$d$ is a fixed positive integer and $\mathbb{R}_{2 \mathrm{d}}[x]$ is the set of homogeneous polynomials of degree 2d \cite{parrilo2000structured}. 

Denote $\rho_{SOS,2d}(\mathcal{A}_\mathcal{M})$ as the solution of Program \ref{primal}. By  Theorem 2.2 in~\cite{parrilo2008approximation}, a feasible solution of Program 1 provides an
upper bound for $\rho(\mathcal{A}_\mathcal{M})$:
\begin{equation}\label{rho_sos}
    \rho(\mathcal{A}_\mathcal{M}) \leq \rho_{SOS,2d}(\mathcal{A}_\mathcal{M})
\end{equation}


The dual variables of Program \ref{primal} are linear functionals over homogeneous polynomials of degree $2d$. 
The dual of the feasibility version of Program \ref{primal} is given by the following Program \ref{dual} \cite{legat2020certifying}.
\begin{program}(Dual)\label{dual}
\begin{equation}\label{equ:dual}
\sum_{i=1}^{m} \widetilde{\mathbb{E}}_{i} \left[p\left(\Phi_{i} x\right)\right] \geq \gamma^{2 d} \sum_{i=1}^{m} \widetilde{\mathbb{E}}_{i}[p(x)], \quad \forall p(x) \in \Sigma_{\mathbf{2} \mathbf{d}},\\
\end{equation}
\begin{equation}\label{equ:dual2}
\begin{aligned}
\sum_{i=1}^{m} \widetilde{\mathbb{E}}_{i} & \left[\sum_{j=1}^{n} x_{j}^{2 d}\right]=1, \\
\widetilde{\mathbb{E}}_{i} & \in \Sigma_{\mathbf{2} \mathbf{d}}^{*},\quad \forall i \in [m],
\end{aligned}
\end{equation}
where $\Sigma_{\mathbf{2} \mathbf{d}}$ is the cone of homogeneous SOS polynomials of degree $2d$, $\Sigma_{\mathbf{2} \mathbf{d}}^{*}$ is the dual of $\Sigma_{\mathbf{2} \mathbf{d}}$, and $\widetilde{\mathbb{E}}_i$ is the pseudo-expectation \cite{barak2012hypercontractivity}. Here, the pseudo-expectation can be seen as the expectation of a pseudo-distribution $\tilde{\mu}$ on $\R^n$ and satisfies:
\begin{equation}
\langle\widetilde{\mathbb{E}}_i, p(x)\rangle=\widetilde{\mathbb{E}}_i[p(x)]=\int_{\mathbb{S}^{n-1}} p(x) \mathrm{d} \widetilde{\mu}
\end{equation}
where $\mathbb{S}^{n-1}$ is the $(n-1)$-dimensional sphere \cite{legat2016generating,barak2012hypercontractivity}. Since $p(x)$ is a homogeneous SOS polynomials of degree $2d$ and $\widetilde{\mathbb{E}}_i$ belongs to the dual of the SOS cone, $\widetilde{\mathbb{E}}_i[p(x)] \geq 0$ holds.
\end{program}
For any matrix $ \Phi_i \in \mathbb{R}^{n\ell \times n\ell}$, we define $\mathcal{L}_{\Phi_i}$ as the linear map such that $\mathcal{L}_{\Phi_i}(p(x)) = p(\Phi_i x)$ and define $\mathcal{L}_{\Phi_i}^*$ as the adjoint linear map such that $\langle\mathcal{L}_{\Phi_i}^* \widetilde{\mathbb{E}}_{i}, p(x)\rangle = \langle \widetilde{\mathbb{E}}_{i}, p(\Phi_i x)\rangle$ for all $\widetilde{\mathbb{E}}_{i} \in \Sigma_{\mathbf{2} \mathbf{d}}^{*}$ and $p(x) \in \Sigma_{\mathbf{2} \mathbf{d}}$. Therefore, the dual constraint (\ref{equ:dual}) is equivalent to:
\begin{equation}\label{equ:nop}
\sum_{i=1}^{m} \mathcal{L}_{\Phi_i}^* \widetilde{\mathbb{E}}_{i} \succeq \gamma^{2 d} \sum_{i=1}^{m} \widetilde{\mathbb{E}}_{i}
\end{equation}
Thus, with any positive integer $d$ and any $\gamma<\rho_{SOS,2d}(\mathcal{A}_\mathcal{M}),$ a set of dual variables $\{\widetilde{\mathbb{E}}_{1},\dots,\widetilde{\mathbb{E}}_{m}\}$ can be obtained by solving Program \ref{dual} without specifying $p(x)$.

\begin{remark}\label{jsr_toolbox}
In this work, Program \ref{primal} is mainly used to introduce its dual problem, Program \ref{dual}. We will use other algorithms, rather than Program \ref{primal}, to approximate $\rho(\mathcal{A}_\mathcal{M})$. For example, consider the  system $S(\mathcal{A},\mathcal{M})$ and its lifted system $S(\mathcal{A}_\mathcal{M})$ shown in Example \ref{exp:0}. By choosing the Gripenberg’s algorithm and the conitope algorithm in the \emph{jsr} function of the JSR toolbox in \cite{vankeerberghen2014jsr}, we obtain the following bounds in 8.9 seconds in a computer with 3.7 GHz CPU and 32GB memory:
\begin{equation} \label{equ:jsr_bounds}
0.97481720 \leq \rho(\mathcal{A}_\mathcal{M}) \leq 0.97481730.    
\end{equation}
If we choose $2d=4$  and use the \emph{jsr\_opti\_sos} function that employs Program \ref{primal}, then it takes about 108 seconds to obtain the following bounds:
\begin{align}
0.69673837 \leq \rho(\mathcal{A}_\mathcal{M}) \leq 0.98632317.\label{ex1eq2}
\end{align}
Clearly, the bounds shown in \eqref{equ:jsr_bounds} is more accurate than \eqref{ex1eq2} with a much shorter computational time. 
\end{remark}

\begin{remark}
When the degree $2d$ is chosen to be large, the SOS constraint corresponding to \eqref{equ:dual} involves a high order SOS polynomial, which renders the corresponding SDP difficult to solve. A possible method to alleviate this issue is to replace the dual constraint (\ref{equ:dual}) with the diagonally-dominant-sum-of-squares (DSOS) constraint or the scaled-diagonally-dominant-sum-of-squares (SDSOS) constraint  \cite{ahmadi2014dsos}. Nonetheless, the numerical examples in Section \ref{sec:example} will show that  Program \ref{dual} can normally generate satisfying results with small values of $d$.
\end{remark}

\subsection{Sequence Generating Algorithm}

In this subsection, we will present Algorithm \ref{algorithm2} based on the dual SOS program for  $S(\mathcal{A}_\mathcal{M})$. We will prove that the sequence of matrices $\Phi_{\sigma_{1}}, \cdots, \Phi_{\sigma_{k}}$
generated by Algorithm \ref{algorithm2} has an asymptotic growth rate $\lim_{k \rightarrow \infty}\|\Phi_{\sigma_{1}} \cdots \Phi_{\sigma_{k}}\|_{2}^{\frac{1}{k}}$ that can be made close to $\rho(\mathcal{A}_\mathcal{M})$, and the switching sequence $\sigma=\sigma_1\sigma_{2}...\sigma_k$ is  always accepted by the constraining automaton (i.e., $\sigma\in L(\mathcal{M})$). Based on that, we will prove that the corresponding sequence of matrices $A_{\sigma_{1}}, \cdots, A_{\sigma_{k}}$ for the original constrained switching system $S(\mathcal{A},\mathcal{M})$ has an asymptotic growth rate $\lim_{k \rightarrow \infty}\|A_{\sigma_{1}} \cdots A_{\sigma_{k}}\|_{2}^{\frac{1}{k}}$ that can be made arbitrarily close to $\rho(\mathcal{A},\mathcal{M})$. 

Given a set of matrices $\mathcal{A}_\mathcal{M}=\{\Phi_1,\dots,\Phi_m\}$ with $\Phi_i$ defined in \eqref{Phii} and a set of dual variables $\{\widetilde{\mathbb{E}}_{1},\dots,\widetilde{\mathbb{E}}_{m}\}$ that are obtained by solving Program \ref{dual}, Algorithm \ref{algorithm2} below generates a switching sequence $\sigma=\sigma_1\sigma_{2}...\sigma_k$ such that the product of matrices $\Phi_{\sigma_{1}} \cdots \Phi_{\sigma_{k}}$ has an asymptotic growth rate close to $\rho(\mathcal{A}_\mathcal{M})$. 
Algorithm \ref{algorithm2} takes a positive integer $h$ as an input parameter, which can be considered as the ``horizon'' of the algorithm. Instead of generating each switching mode $\sigma_i$ one by one, Algorithm \ref{algorithm2}  generates $h$ switching modes simultaneously at each iteration step. Therefore, the total length of sequences generated by the algorithm, $k$, will be a multiple of $h$.

\begin{algorithm}
\SetAlgoLined
\KwIn {a set of matrices $\{\Phi_1,\dots,\Phi_m\}$, a set of dual variables $\{\widetilde{\mathbb{E}}_{1},\dots,\widetilde{\mathbb{E}}_{m}\}$, three positive integers $h,k,d$ where $k$ is a multiple of $h$}
\KwOut {a switching sequence $\sigma=\sigma_1 \sigma_{2}...\sigma_k$}
Choose an arbitrary polynomial $p_{0}(x) \in \operatorname{int}(\Sigma_{2 \mathrm{d}})$ \\
Set $p_{1}(x) \leftarrow p_{0}(x)$ \\
\For {$i=1,h+1,2h+1, \ldots,k-h+1$}{
\begin{equation*}
\begin{aligned}
& \sigma_{i},...,\sigma_{i+h-1} \leftarrow \arg \max _{\hat{\sigma}_1,...,\hat{\sigma}_h} \widetilde{\mathbb{E}}_{\hat{\sigma}_h}\left[p_{i}\left(\Phi_{\hat{\sigma}_1} \cdots \Phi_{\hat{\sigma}_h} x\right)\right]\\
& p_{i+h}(x) \leftarrow p_{i}\left(\Phi_{\sigma_{i}} \cdots \Phi_{\sigma_{i+h-1}} x\right)
\end{aligned}
\end{equation*}
}
 \caption{Generate a  sequence of matrices $\Phi_{\sigma_{1}}, \cdots, \Phi_{\sigma_{k}}$ with an asymptotic growth rate close to $\rho(\mathcal{A}_\mathcal{M})$}
 \label{algorithm2}
\end{algorithm}

To start with, we choose an arbitrary polynomial $p_0(x)$ in the interior of the cone of SOS homogeneous polynomials of degree $2d$, i.e. $p_0(x) \in \operatorname{int}(\Sigma_{2 \mathrm{d}})$. 
Given the set of matrices $\{\Phi_1,\dots,\Phi_m\}$ and the set of dual variables $\{\widetilde{\mathbb{E}}_{1},\dots,\widetilde{\mathbb{E}}_{m}\}$ computed from Program \ref{dual}, we define 
\begin{equation}\label{theta}
\theta_{k} := \widetilde{\mathbb{E}}_{\sigma_{k}}\left[p_0\left(\Phi_{\sigma_{1}} \cdots \Phi_{\sigma_{k}} x\right)\right].
\end{equation}
Then, the ``for loop'' generates a switching sequence 
such that $\theta_{k}$ 
remains large for increasing $k$. 
Algorithm \ref{algorithm2} terminates with $i=k-h+1$, from which we obtain 
\begin{align}
p_{k+1}(x)=p_0(\Phi_{\sigma_{1}} \cdots \Phi_{\sigma_{k}} x).\label{pk}
\end{align}

Note that the order of mode subscripts in the matrix product of Algorithm \ref{algorithm2}, and  \eqref{theta}, \eqref{pk} as well, is reversed from that of \eqref{sigorder}. Also note that due to the randomness of $p_0(x)$, the output of Algorithm \ref{algorithm2} may vary among different runs.

The following lemma provides an inequality on $\theta_{k}$ using the dual constraint (\ref{equ:dual}).


\begin{lemma}\label{decrease2}
Consider a finite set of matrices $\mathcal{A}_\mathcal{M}=\{\Phi_1,\dots,\Phi_m\}$. For any $d,h \in \Z_{>0}$, any $\gamma<\rho_{SOS,2d}(\mathcal{A}_\mathcal{M})$, and any $k\in \Z_{>0}$ that is a multiple of $h$, Algorithm \ref{algorithm2} produces a sequence $\sigma = \sigma_1\sigma_2...\sigma_k$, for which the sequence of $\theta_k$ defined in \eqref{theta} satisfies the following inequality:
$$
\theta_{k} \geq \frac{\gamma^{2d h}}{m^h} \theta_{k-h}
$$
\end{lemma}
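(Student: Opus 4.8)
The goal is to lower-bound $\theta_k = \widetilde{\mathbb{E}}_{\sigma_k}[p_0(\Phi_{\sigma_1}\cdots\Phi_{\sigma_k}x)]$ in terms of $\theta_{k-h}$. The key observation is that $p_k(x) = p_0(\Phi_{\sigma_1}\cdots\Phi_{\sigma_{k-h}}x)$ is itself a homogeneous SOS polynomial of degree $2d$ (being a composition of an SOS polynomial with a linear map), so it is a legitimate test polynomial $p(x)$ in the dual constraint \eqref{equ:dual}. Applying \eqref{equ:dual} to $p = p_k$ gives
$$
\sum_{i=1}^{m} \widetilde{\mathbb{E}}_i[p_k(\Phi_i x)] \geq \gamma^{2d} \sum_{i=1}^{m} \widetilde{\mathbb{E}}_i[p_k(x)].
$$
Since each summand $\widetilde{\mathbb{E}}_i[p_k(x)]$ is nonnegative (as $\widetilde{\mathbb{E}}_i \in \Sigma_{2d}^*$ and $p_k \in \Sigma_{2d}$), and since $\widetilde{\mathbb{E}}_{\sigma_{k-h}}[p_k(x)] = \theta_{k-h}$ by definition \eqref{theta}, we get $\sum_{i=1}^m \widetilde{\mathbb{E}}_i[p_k(x)] \geq \theta_{k-h}$, hence
$$
\sum_{i=1}^{m} \widetilde{\mathbb{E}}_i[p_k(\Phi_i x)] \geq \gamma^{2d}\,\theta_{k-h}.
$$

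\textbf{From one step to $h$ steps.} The displayed inequality only advances the product by one matrix $\Phi_i$, whereas the algorithm advances by $h$ at a time. The plan is to iterate this estimate $h$ times. Define intermediate polynomials $q_0 = p_k$ and $q_{j}(x) = q_{j-1}(\Phi_{\tau_j}x)$ for a greedily chosen mode sequence $\tau_1,\dots,\tau_h$, where at each stage $\tau_j$ is chosen to maximize $\widetilde{\mathbb{E}}_{\tau_j}[q_{j-1}(\Phi_{\tau_j}x)]$ over $[m]$ — exactly the inner structure of the $\arg\max$ in Algorithm \ref{algorithm2}. Each $q_{j-1}$ is again in $\Sigma_{2d}$, so the one-step bound applies, and because the maximizing term is at least the average of the $m$ nonnegative terms, we obtain $\widetilde{\mathbb{E}}_{\tau_j}[q_j(x)] \geq \frac{\gamma^{2d}}{m}\,\widetilde{\mathbb{E}}_{\tau_{j-1}}[q_{j-1}(x)]$ (with the base case $\widetilde{\mathbb{E}}_{\tau_0}[q_0]$ interpreted as $\theta_{k-h}$ via the sum bound above). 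Chaining these $h$ inequalities yields $\widetilde{\mathbb{E}}_{\tau_h}[q_h(x)] \geq \frac{\gamma^{2dh}}{m^h}\,\theta_{k-h}$.

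\textbf{Matching the algorithm's choice.} The final step is to check that the quantity produced by this greedy chain coincides with, or is dominated by, what Algorithm \ref{algorithm2} actually computes. The algorithm picks $\sigma_i,\dots,\sigma_{i+h-1}$ jointly to maximize $\widetilde{\mathbb{E}}_{\hat\sigma_h}[p_i(\Phi_{\hat\sigma_1}\cdots\Phi_{\hat\sigma_h}x)]$ over all of $[m]^h$; in our indexing with $i = k-h+1$ this is precisely $\max_{\tau\in[m]^h}\widetilde{\mathbb{E}}_{\tau_h}[q_0(\Phi_{\tau_1}\cdots\Phi_{\tau_h}x)]$, and the resulting value is exactly $\theta_k$ by \eqref{theta} and \eqref{pk}. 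Since the joint maximum over $[m]^h$ is at least the value attained by the particular greedy sequence $\tau_1,\dots,\tau_h$ constructed above, we conclude $\theta_k \geq \frac{\gamma^{2dh}}{m^h}\,\theta_{k-h}$, as claimed. The main obstacle is purely bookkeeping: being careful that the reversed subscript order in the matrix products (noted after \eqref{pk}) is handled consistently, that the composition $q_{j-1}\circ\Phi_{\tau_j}$ stays inside $\Sigma_{2d}$ at every stage, and that the greedy chain's value is genuinely a feasible candidate for the algorithm's joint $\arg\max$ so that the inequality goes in the right direction.
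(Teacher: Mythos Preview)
Your proposal is correct and uses the same core ingredients as the paper: the closure of $\Sigma_{2d}$ under linear substitutions, $h$ successive applications of the dual constraint \eqref{equ:dual}, nonnegativity of $\widetilde{\mathbb{E}}_i$ on $\Sigma_{2d}$, and the ``max $\geq$ average'' bound.

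The only difference is organizational. The paper does not introduce an auxiliary greedy path: it simply sums over \emph{all} $m^h$ length-$h$ sequences, applies \eqref{equ:dual} step by step to obtain
\[
\sum_{\hat\sigma\in[m]^h}\widetilde{\mathbb{E}}_{\hat\sigma_h}\!\left[p_{k-h+1}(\Phi_{\hat\sigma_1}\cdots\Phi_{\hat\sigma_h}x)\right]\;\geq\;\gamma^{2dh}\,\theta_{k-h},
\]
and then uses that the algorithm's choice attains the maximum among these $m^h$ nonnegative summands, so $m^h\theta_k$ dominates the left side. Your version instead distributes the factor $1/m$ across each step via a stepwise greedy choice $\tau_1,\dots,\tau_h$, and then needs the additional (easy) remark that the algorithm's joint $\arg\max$ over $[m]^h$ dominates the value attained by this particular greedy sequence. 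Both routes are valid; the paper's is slightly shorter since it avoids constructing the intermediate $\tau_j$'s and the final comparison between the joint and greedy maxima. One small notational slip: what you call $p_k$ is $p_{k-h+1}$ in the paper's indexing, but this does not affect the argument.
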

\begin{proof}
Since $p_{0}(x)$ is SOS, 
$p_{k-h+1}\left(x\right)$ is also SOS.
Using (\ref{equ:dual}), we have
$$
\begin{aligned}
& \sum_{\hat{\sigma}_1...\hat{\sigma}_h \in [m]^h} \hat{\mathbb{E}}_{\hat{\sigma}_h}\left[p_{k-h+1}\left(\Phi_{\hat{\sigma}_1} \cdots \Phi_{\hat{\sigma}_h} x\right)\right] \\
&\quad \geq \gamma^{2 d} \sum_{\hat{\sigma}_1...\hat{\sigma}_{h-1} \in [m]^{h-1}} \widetilde{\mathbb{E}}_{\hat{\sigma}_{h-1}}\left[p_{k-h+1}\left(\Phi_{\hat{\sigma}_{1}} \cdots \Phi_{\hat{\sigma}_{h-1}} x\right)\right] \\
&\quad \vdots \\
&\quad \geq \gamma^{2 d h} \theta_{k-h}
\end{aligned}
$$
Since the left-hand side expression has $m^{h}$ positive terms and Algorithm \ref{algorithm2} picks the term with the highest value as $\theta_k$, we have $m^h\theta_{k} \geq \gamma^{2d h} \theta_{k-h}$. The conclusion follows immediately.
\end{proof}

\begin{lemma}\label{beta}(Lemma 6 in \cite{legat2016generating})
For any polynomial $p(x) \in \Sigma_{2 \mathrm{d}}$ and any matrix $\Phi \in \mathbb{R}^{n\ell \times n\ell}$, there exists a positive constant $\beta$ that does not depend on $\Phi$ such that
\begin{equation}
\beta\|\Phi\|_{2}^{2 d} p(x)-p(\Phi x) \text{ is SOS.}
\end{equation}
\end{lemma}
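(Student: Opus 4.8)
The plan is to strip off the dependence on $\Phi$ by homogeneity, reduce the SOS claim to a positive-semidefinite matrix inequality through a Gram (Veronese) representation of $p$, and then get that inequality from a crude operator-norm bound.

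First I would dispose of $\Phi=0$: since $p$ is homogeneous of degree $2d\geq 1$, both $\|\Phi\|_2^{2d}p(x)$ and $p(\Phi x)=p(0)$ vanish, so $\beta\|\Phi\|_2^{2d}p(x)-p(\Phi x)$ is the zero polynomial, which is SOS. For $\Phi\neq 0$ set $\Psi:=\Phi/\|\Phi\|_2$, so $\|\Psi\|_2=1$; homogeneity of $p$ gives $p(\Phi x)=\|\Phi\|_2^{2d}\,p(\Psi x)$, hence $\beta\|\Phi\|_2^{2d}p(x)-p(\Phi x)=\|\Phi\|_2^{2d}\bigl(\beta p(x)-p(\Psi x)\bigr)$. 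Since a nonnegative multiple of an SOS polynomial is SOS, it suffices to produce a single $\beta=\beta(p)>0$ making $\beta p(x)-p(\Psi x)$ SOS for every $\Psi$ with $\|\Psi\|_2\leq 1$.

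Next I would write $p(x)=z(x)^\top P\,z(x)$, where $z(x)$ collects the monomials of degree $d$ in the $n\ell$ variables and $P$ is a positive semidefinite Gram matrix of $p$; because $p$ lies in the interior of $\Sigma_{2d}$ one may take $P\succ 0$. Letting $\Psi^{[d]}$ denote the induced (Veronese) matrix characterized by $z(\Psi x)=\Psi^{[d]}z(x)$, one gets $p(\Psi x)=z(x)^\top(\Psi^{[d]})^\top P\,\Psi^{[d]}z(x)$, so $\beta p(x)-p(\Psi x)=z(x)^\top\bigl(\beta P-(\Psi^{[d]})^\top P\,\Psi^{[d]}\bigr)z(x)$, which is SOS as soon as $\beta P\succeq(\Psi^{[d]})^\top P\,\Psi^{[d]}$. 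Finally I would bound the right side uniformly: $(\Psi^{[d]})^\top P\,\Psi^{[d]}\preceq\lambda_{\max}(P)\,\|\Psi^{[d]}\|_2^2\,I\preceq\tfrac{\lambda_{\max}(P)}{\lambda_{\min}(P)}\|\Psi^{[d]}\|_2^2\,P$, and $\|\Psi^{[d]}\|_2\leq c_{d,n\ell}\|\Psi\|_2^{d}\leq c_{d,n\ell}$ for a constant depending only on $d$ and $n\ell$ (with a suitably normalized monomial basis this is the identity $\|\Psi^{[d]}\|_2=\|\Psi\|_2^{d}$). Taking $\beta:=c_{d,n\ell}^2\,\lambda_{\max}(P)/\lambda_{\min}(P)$, which depends only on $p$, gives $\beta P\succeq(\Psi^{[d]})^\top P\,\Psi^{[d]}$ for all $\|\Psi\|_2\leq 1$, and the SOS conclusion follows.

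I expect the main obstacle to be the choice of Gram matrix in the second step: a positive-\emph{definite} $P$ is essential, since an SOS polynomial on the boundary of the cone may vanish identically on a proper subspace on which $p(\Psi x)$ does not, and then no $\beta$ can work. Thus the crux is recognizing that the statement is only ever applied with $p$ in the interior of $\Sigma_{2d}$ — which is precisely why Algorithm~\ref{algorithm2} is initialized with $p_0\in\operatorname{int}(\Sigma_{2d})$ — after which the uniformity in $\Phi$ comes essentially for free from homogeneity together with the submultiplicative-type bound on $\|\Psi^{[d]}\|_2$.
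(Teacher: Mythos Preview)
The paper does not give its own proof of this lemma; it is simply quoted as Lemma~6 of \cite{legat2016generating}, so there is no in-paper argument to compare against.

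Your argument is sound under the extra hypothesis $p\in\operatorname{int}(\Sigma_{2d})$: the homogeneity reduction to $\|\Psi\|_2\le 1$, the Gram/Veronese rewriting $p(\Psi x)=z(x)^\top(\Psi^{[d]})^\top P\,\Psi^{[d]}\,z(x)$, and the uniform matrix inequality via $\lambda_{\max}(P)/\lambda_{\min}(P)$ together with $\|\Psi^{[d]}\|_2\le c_{d,n\ell}\|\Psi\|_2^{d}$ are all standard and correct. You are also right that the interior assumption is genuinely needed and not merely a convenience: for a boundary point such as $p(x)=x_1^{2d}$, taking $\Phi$ to send $x\mapsto(x_2,0,\ldots,0)$ gives $p(\Phi x)=x_2^{2d}$ while $\|\Phi\|_2=1$, and $\beta x_1^{2d}-x_2^{2d}$ is never nonnegative, let alone SOS. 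Since the only place the paper invokes the lemma is in the proof of Proposition~\ref{norm2} with $g(x)\in\operatorname{int}(\Sigma_{2d})$, your caveat matches exactly how the result is used.
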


Proposition \ref{norm2} provides an estimate on the performance of the asymptotic  growth  rate  of matrices $\Phi_{\sigma_1},\cdots,\Phi_{\sigma_k}$ that are generated by  Algorithm \ref{algorithm2}.

\begin{proposition}\label{norm2} 
Consider a finite set of matrices $\mathcal{A}_\mathcal{M}=\{\Phi_1,\dots,\Phi_m\}$. 
For any $d,h \in \Z_{>0}$, any $\gamma<\rho_{SOS,2d}(\mathcal{A}_\mathcal{M})$, and any $k\in \Z_{>0}$ that is a multiple of $h$,  Algorithm \ref{algorithm2} produces a switching sequence  $\sigma=\sigma_1 \sigma_{2}...\sigma_k$ such that the following inequality holds: 
\begin{equation}\label{ineqprop}
\lim _{k \rightarrow \infty}\left\|\Phi_{\sigma_{1}} \cdots \Phi_{\sigma_{k}}\right\|_{2}^{\frac{1}{k}} \geq \frac{\gamma}{m^{\frac{1}{2d}}}
\end{equation}
\end{proposition}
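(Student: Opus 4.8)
The plan is to combine the recursive lower bound on $\theta_k$ from Lemma \ref{decrease2} with the upper bound on $p(\Phi x)$ in terms of $\|\Phi\|_2^{2d}$ from Lemma \ref{beta}, and then take $k$-th roots and let $k \to \infty$. First I would iterate the inequality of Lemma \ref{decrease2}: since $k$ is a multiple of $h$, applying $\theta_j \geq \frac{\gamma^{2dh}}{m^h}\theta_{j-h}$ a total of $k/h$ times gives
\begin{equation*}
\theta_k \geq \left(\frac{\gamma^{2dh}}{m^h}\right)^{k/h}\theta_0 = \frac{\gamma^{2dk}}{m^{k}}\,\theta_0,
\end{equation*}
where $\theta_0 = \widetilde{\mathbb{E}}_{\sigma_0}[p_0(x)]$ (or, reading the indices of Algorithm \ref{algorithm2} literally, the appropriate base term $\theta_h \geq \frac{\gamma^{2dh}}{m^h}\widetilde{\mathbb{E}}[\cdot]$ on the initial block); in either case the base quantity is a fixed positive constant independent of $k$, which I will call $c_1 > 0$. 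So $\theta_k \geq c_1 \gamma^{2dk} m^{-k}$.

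Next I would upper-bound $\theta_k$ using $\|\Phi_{\sigma_1}\cdots\Phi_{\sigma_k}\|_2$. By definition \eqref{theta}, $\theta_k = \widetilde{\mathbb{E}}_{\sigma_k}[p_0(\Phi_{\sigma_1}\cdots\Phi_{\sigma_k} x)]$. Applying Lemma \ref{beta} with $\Phi = \Phi_{\sigma_1}\cdots\Phi_{\sigma_k}$ and $p = p_0$, there is a constant $\beta > 0$ (independent of $\Phi$) such that $\beta\|\Phi_{\sigma_1}\cdots\Phi_{\sigma_k}\|_2^{2d} p_0(x) - p_0(\Phi_{\sigma_1}\cdots\Phi_{\sigma_k} x)$ is SOS. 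Since $\widetilde{\mathbb{E}}_{\sigma_k} \in \Sigma_{2d}^*$ is nonnegative on the SOS cone, applying it to this SOS polynomial yields
\begin{equation*}
\theta_k = \widetilde{\mathbb{E}}_{\sigma_k}[p_0(\Phi_{\sigma_1}\cdots\Phi_{\sigma_k} x)] \leq \beta\,\|\Phi_{\sigma_1}\cdots\Phi_{\sigma_k}\|_2^{2d}\,\widetilde{\mathbb{E}}_{\sigma_k}[p_0(x)] =: c_2\,\|\Phi_{\sigma_1}\cdots\Phi_{\sigma_k}\|_2^{2d},
\end{equation*}
where $c_2 = \beta\,\widetilde{\mathbb{E}}_{\sigma_k}[p_0(x)]$. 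One must check $c_2$ can be bounded uniformly in $k$: since there are only finitely many modes, $\widetilde{\mathbb{E}}_{\sigma_k}[p_0(x)] \leq \max_i \widetilde{\mathbb{E}}_i[p_0(x)]$, a fixed constant, so $c_2 \leq c_2'$ uniformly.

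Combining the two bounds gives $c_1 \gamma^{2dk} m^{-k} \leq \theta_k \leq c_2' \|\Phi_{\sigma_1}\cdots\Phi_{\sigma_k}\|_2^{2d}$, hence $\|\Phi_{\sigma_1}\cdots\Phi_{\sigma_k}\|_2^{2d} \geq (c_1/c_2')\,\gamma^{2dk} m^{-k}$. Taking the $(2dk)$-th root,
\begin{equation*}
\|\Phi_{\sigma_1}\cdots\Phi_{\sigma_k}\|_2^{1/k} \geq (c_1/c_2')^{1/(2dk)}\,\gamma\,m^{-1/(2d)},
\end{equation*}
and since $(c_1/c_2')^{1/(2dk)} \to 1$ as $k \to \infty$, taking the limit yields \eqref{ineqprop}. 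The main obstacle I anticipate is bookkeeping: getting the index conventions of Algorithm \ref{algorithm2} right (the loop starts at $i=1$ and the reversed subscript order relative to \eqref{sigorder}), confirming that the "base" term $\theta_0$ — or the first-block term — is a strictly positive constant that does not depend on $k$ (this uses $p_0 \in \operatorname{int}(\Sigma_{2d})$ so $p_0(\Phi_{\sigma_1}\cdots\Phi_{\sigma_h}x)$ is a nonzero SOS polynomial and the $\widetilde{\mathbb{E}}_i$ are not all zero by the normalization \eqref{equ:dual2}), and verifying that the positivity of $\theta_k$ is preserved along the recursion so that the $k$-th roots and limits are well-defined. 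The analytic content is entirely in Lemmas \ref{decrease2} and \ref{beta}; everything else is careful constant-chasing.
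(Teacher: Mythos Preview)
Your approach is correct and follows the same skeleton as the paper's proof: iterate Lemma~\ref{decrease2} to lower-bound $\theta_k$ geometrically, apply Lemma~\ref{beta} to upper-bound $\theta_k$ by $\|\Phi_{\sigma_1}\cdots\Phi_{\sigma_k}\|_2^{2d}$ times a constant, combine, take $k$-th roots, and pass to the limit.

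The one place where you and the paper diverge is in how the $k$-dependence of the functional $\widetilde{\mathbb{E}}_{\sigma_k}$ is neutralized. You bound it uniformly: on the upper side by $\max_i \widetilde{\mathbb{E}}_i[p_0(x)]$, and on the lower side by arguing that the base term $\theta_h$ (or an analogous first-block quantity) is a fixed strictly positive constant. The paper instead runs a pigeonhole argument: since the modes lie in $[m]$, some mode $\bar{\sigma}$ recurs infinitely often at multiples of $h$; restarting the analysis at the first such index $k_1$ with $g=p_{k_1+1}$ forces the \emph{same} functional $\widetilde{\mathbb{E}}_{\bar{\sigma}}$ to appear on both sides of the chained inequality, so that $\widetilde{\mathbb{E}}_{\bar{\sigma}}[g(x)]$ cancels exactly and no separate positivity/uniformity check for two different constants is needed. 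Your route is more direct and avoids the pigeonhole bookkeeping; the paper's route sidesteps the need to verify that the base constant $c_1$ is strictly positive (which, as you note, is the main loose end in your sketch). Either technique closes the argument.
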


\begin{proof}
Since there are finite switching modes (i.e., $\sigma_i \in [m], i=1,2,...$), there must be a mode $\bar{\sigma} \in [m]$  that appears infinitely many times in the sequence at multiples of $h$. Let $k_{1}$ be the smallest multiple of $h$ such that $\sigma_{k_{1}}=\bar{\sigma}$
and let $g(x)=p_{k_{1}+1}(x)$. Since $p_{0}(x) \in \operatorname{int}(\Sigma_{2 \mathrm{d}}),$ we know that $g(x) \in \operatorname{int}(\Sigma_{2 \mathrm{d}})$.

For an arbitrarily large integer $K,$ there exists a $k \geq K$ and $k$ is a multiple of $h$ such that $\sigma_{k_{1}+k}=\bar{\sigma}$. Let
$s_{k}=\left(\sigma_{k_{1}+1}, \ldots, \sigma_{k_{1}+k}\right) .$ By Lemma \ref{decrease2}, we have
\begin{equation}\label{norm_gur2}
\widetilde{\mathbb{E}}_{\bar{\sigma}}\left[g\left(\Phi_{s_{k}} x\right)\right] \geq \frac{\gamma^{2 d k}}{m^{k}} \widetilde{\mathbb{E}}_{\bar{\sigma}}[g(x)]
\end{equation}
where $\Phi_{s_{k}}:= \Phi_{\sigma_{k_{1}+1}}\cdots\Phi_{\sigma_{k_{1}+k}}.$
By Lemma \ref{beta}, there exists a constant $\beta$ that does not depend on $\Phi_{s_{k}}$ such that
\begin{equation}
\beta\left\|\Phi_{s_{k}}\right\|_{2}^{2d} g(x)-g\left(\Phi_{s_{k}} x\right) \text { is SOS. }
\end{equation}
Therefore,
\begin{equation}
\widetilde{\mathbb{E}}_{\bar{\sigma}}\left[\beta\left\|\Phi_{s_{k}}\right\|_{2}^{2d} g(x)\right] \geq \widetilde{\mathbb{E}}_{\bar{\sigma}}\left[g\left(\Phi_{s_{k}} x\right)\right]
\end{equation}
and by (\ref{norm_gur2}),
\begin{equation}\label{eqbetaineq}
\beta\left\|\Phi_{s_{k}}\right\|_{2}^{2 d} \widetilde{\mathbb{E}}_{\bar{\sigma}}[g(x)] \geq \frac{\gamma^{2 d k}}{m^{k}} \widetilde{\mathbb{E}}_{\bar{\sigma}}[g(x)]
\end{equation}
Since $\widetilde{\mathbb{E}}_{\bar{\sigma}}[g(x)]>0$, we divide both sides of \eqref{eqbetaineq} by $\widetilde{\mathbb{E}}_{\bar{\sigma}}[g(x)]$ and  get
\begin{equation}
\beta\left\|\Phi_{s_{k}}\right\|_{2}^{2 d} \geq \frac{\gamma^{2 d k}}{m^{k}}
\end{equation}
or equivalently,
\begin{equation}
\left\|\Phi_{s_{k}}\right\|_{2}^{\frac{1}{k}} \geq \frac{\gamma}{\left[\beta^{\frac{1}{k}} m\right]^{\frac{1}{2 d}}}
\end{equation}
Taking the limit of $K$ to $\infty$, we have
\begin{equation}
\lim _{k \rightarrow \infty}\left\|\Phi_{\sigma_{k_1+1}} \cdots \Phi_{\sigma_{k_1+k}}\right\|_{2}^{\frac{1}{k}} \geq \frac{\gamma}{m^{\frac{1}{2d}}}.
\end{equation}
Using the fact that $\lim _{k \rightarrow \infty}\left\|\Phi_{\sigma_{1}} \cdots \Phi_{\sigma_{k_1}}\right\|_{2}^{\frac{1}{k}} = 1$, the inequality (\ref{ineqprop}) follows immediately.
\end{proof}

The following result shows that the switching sequence generated by Algorithm \ref{algorithm2} is accepted by the DFA $\mathcal{M}$.
\begin{proposition}
Any switching sequence $\sigma=\sigma_1...\sigma_k$  produced by Algorithm \ref{algorithm2} is  accepted by $\mathcal{M}$, i.e., $\sigma \in L(\mathcal{M})$.
\end{proposition}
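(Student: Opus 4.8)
The plan is to exploit the structure of the lifted matrices $\Phi_i = F_i \otimes A_i$ together with Lemma~\ref{cor1}, which characterizes $\M$-accepted sequences by non-vanishing products of the transition structure matrices $F_j$. The key observation is that, by the Kronecker product identity \eqref{kron_prop1}, a product of lifted matrices factors as
\begin{equation*}
\Phi_{\sigma_1}\cdots\Phi_{\sigma_k} = (F_{\sigma_1}\cdots F_{\sigma_k}) \otimes (A_{\sigma_1}\cdots A_{\sigma_k}),
\end{equation*}
so $\Phi_{\sigma_1}\cdots\Phi_{\sigma_k} = {\bf 0}$ whenever $F_{\sigma_1}\cdots F_{\sigma_k} = {\bf 0}$, and hence by Lemma~\ref{cor1} (applied in the appropriate index order) a product of lifted matrices is zero exactly when the corresponding switching sequence is \emph{not} $\M$-accepted. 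Therefore it suffices to show that the product $\Phi_{\sigma_1}\cdots\Phi_{\sigma_k}$ produced by Algorithm~\ref{algorithm2} is not the zero matrix.

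First I would argue that at every iteration of the ``for loop'' the $\arg\max$ is attained at a tuple $(\sigma_i,\dots,\sigma_{i+h-1})$ for which $\widetilde{\mathbb{E}}_{\sigma_{i+h-1}}\big[p_i(\Phi_{\sigma_i}\cdots\Phi_{\sigma_{i+h-1}}x)\big] > 0$. This follows from Lemma~\ref{decrease2}: the quantity $\theta_k$ satisfies $\theta_k \ge \tfrac{\gamma^{2dh}}{m^h}\theta_{k-h}$, and since $p_0(x)\in\operatorname{int}(\Sigma_{2\mathrm d})$ we have $\theta_0 = \widetilde{\mathbb{E}}_{?}[p_0(x)] > 0$ at initialization (more precisely, one uses strict positivity of the pseudo-expectation on interior points of the SOS cone, exactly as in the proof of Proposition~\ref{norm2}), so by induction $\theta_k > 0$ for all $k$. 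Then I would show that $\theta_k > 0$ forces $\Phi_{\sigma_1}\cdots\Phi_{\sigma_k} \ne {\bf 0}$: if that product were zero, then $p_0(\Phi_{\sigma_1}\cdots\Phi_{\sigma_k}x) = p_0(0) = 0$ identically (recall $p_0$ is homogeneous of degree $2d \ge 1$), whence $\theta_k = \widetilde{\mathbb{E}}_{\sigma_k}[0] = 0$, a contradiction.

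Finally I would translate back: since $\Phi_{\sigma_1}\cdots\Phi_{\sigma_k} = (F_{\sigma_1}\cdots F_{\sigma_k})\otimes(A_{\sigma_1}\cdots A_{\sigma_k}) \ne {\bf 0}$, neither Kronecker factor is zero, so $F_{\sigma_1}\cdots F_{\sigma_k}\ne {\bf 0}$, and Lemma~\ref{cor1} yields $\sigma = \sigma_1\dots\sigma_k \in L(\M)$. I expect the main obstacle to be bookkeeping of the index order: Lemma~\ref{cor1} is stated for $F_{\sigma_{k-1}}\cdots F_{\sigma_0}$ while Algorithm~\ref{algorithm2} builds products $\Phi_{\sigma_1}\cdots\Phi_{\sigma_k}$ in the opposite order (as the paper itself flags right after \eqref{pk}), so I would need to either apply the lemma to the reversed sequence and invoke the fact that $\M$-acceptance is preserved under the reversal convention used here, or re-index carefully so that the non-vanishing condition matches the lemma's hypothesis exactly. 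The positivity argument itself is routine once the $\theta_k>0$ claim from Lemma~\ref{decrease2} is in hand.
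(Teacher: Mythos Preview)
Your proposal is correct but takes a genuinely different route from the paper. The paper argues by contradiction via Proposition~\ref{norm2}: if $\sigma\notin L(\M)$ then $F_{\sigma_1}\cdots F_{\sigma_k}=\mathbf{0}$, hence $\Phi_{\sigma_1}\cdots\Phi_{\sigma_k}=\mathbf{0}$, and extending the sequence and passing to the limit yields $\lim_{k\to\infty}\|\Phi_{\sigma_1}\cdots\Phi_{\sigma_k}\|_2^{1/k}=0$, contradicting the asymptotic lower bound $\gamma/m^{1/(2d)}>0$. You instead bypass Proposition~\ref{norm2} entirely and work one level lower, using Lemma~\ref{decrease2} directly to get $\theta_k>0$ for each fixed finite $k$, and then observing that $\theta_k>0$ already forces $\Phi_{\sigma_1}\cdots\Phi_{\sigma_k}\neq\mathbf{0}$ because $p_0$ is homogeneous. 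Your argument is more self-contained for a finite-length statement and avoids the mildly awkward ``let $k\to\infty$'' step the paper uses to prove a claim about a fixed $k$; the paper's version, in exchange, is shorter on the page since the heavy lifting is outsourced to Proposition~\ref{norm2}. Both proofs share the same Kronecker factorization and the same appeal to Lemma~\ref{cor1}, and the index-order bookkeeping issue you flag is present verbatim in the paper's own proof (it writes $F_{\sigma_1}\cdots F_{\sigma_k}=\mathbf{0}$ from $\sigma\notin L(\M)$ even though Lemma~\ref{cor1} is stated with the reversed product), so you are not introducing any new difficulty there. The one place to tighten is the base case: as you note with the ``$\widetilde{\mathbb{E}}_{?}$'', $\theta_0$ is not literally defined; the clean fix is to start the induction at $\theta_h$ and use the normalization \eqref{equ:dual2} together with $p_0\in\operatorname{int}(\Sigma_{2\mathrm d})$ to get $\sum_i\widetilde{\mathbb{E}}_i[p_0(x)]>0$, from which the chain in Lemma~\ref{decrease2}'s proof gives $\theta_h>0$ directly.
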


\begin{proof}
We prove the result by contradiction. 
Assume that the sequence $\sigma=\sigma_1...\sigma_k$ produced by Algorithm \ref{algorithm2} can not be accepted by $\mathcal{M}$, i.e. $\sigma \notin L(\mathcal{M})$. Then, from Lemma \ref{cor1}, we know $F_{\sigma_{1}}\dots F_{\sigma_k} = {\bf 0}$.
Recall that $\Phi_{\sigma_i}=F_{\sigma_i}\otimes A_{\sigma_i}, i=1,...k$, as shown in \eqref{Phii}.
Using the property in (\ref{kron_prop1}), we have
\begin{equation*}
\begin{aligned}
\Phi_{\sigma_{1}} \cdots \Phi_{\sigma_{k}} &=\left(F_{\sigma_{1}} \otimes A_{\sigma_{1}}\right)\left(F_{\sigma_{2}} \otimes A_{\sigma_{2}}\right) \ldots\left(F_{\sigma_{k}} \otimes A_{\sigma_{k}}\right) \\
&=\left(F_{\sigma_{1}} F_{\sigma_{2}} \ldots F_{\sigma_{k}}\right) \otimes\left(A_{\sigma_{1}} A_{\sigma_{2}} \cdots A_{\sigma_{k}}\right). \\
\end{aligned}
\end{equation*}
Taking the norm $\left\|\cdot\right\|_2$ on both sides of the equality above and using the property shown in (\ref{kron_prop2}), we have
\begin{align*}
\left\|\Phi_{\sigma_{1}} \cdots \Phi_{\sigma_{k}}\right\|_{2} &= \left\|\left(F_{\sigma_{1}} \ldots F_{\sigma_{k}}\right) \otimes\left(A_{\sigma_{1}} \cdots A_{\sigma_{k}}\right)\right\|_2\\
&= \left\|F_{\sigma_{1}} \ldots F_{\sigma_{k}}\right\|_2 \left\|A_{\sigma_{1}} \cdots A_{\sigma_{k}}\right\|_2.
\end{align*}
Taking $k$th root on both sides, and letting $k$ approach infinity, we have 
\begin{equation}\label{lim_phi}
\begin{aligned}
\lim _{k \rightarrow \infty} & \left\|\Phi_{\sigma_{1}} \cdots \Phi_{\sigma_{k}}\right\|_{2}^{\frac{1}{k}} \\
& = \lim _{k \rightarrow \infty}\left\|F_{\sigma_{1}} \ldots F_{\sigma_{k}}\right\|_2^{\frac{1}{k}} \lim _{k \rightarrow \infty}\left\|A_{\sigma_{1}} \cdots A_{\sigma_{k}}\right\|_2^{\frac{1}{k}}.
\end{aligned}
\end{equation}
Since $F_{\sigma_{1}}\dots F_{\sigma_k} = {\bf 0}$,
\begin{equation*}
\lim _{k \rightarrow \infty} \left\|\Phi_{\sigma_{1}} \cdots \Phi_{\sigma_{k}}\right\|_{2}^{\frac{1}{k}} = 0.
\end{equation*}
However, from Proposition \ref{norm2}, we have
\begin{equation*}
\lim _{k \rightarrow \infty}\left\|\Phi_{\sigma_{1}} \cdots \Phi_{\sigma_{k}}\right\|_{2}^{\frac{1}{k}} \geq \frac{\gamma}{m^{\frac{1}{2d}}} >0.
\end{equation*}
Therefore, the initial assumption $\sigma \notin L(\mathcal{M})$ is false, which completes the proof. 
\end{proof}

The following theorem is the main result of this paper. It shows that 
the sequence of matrices $A_{\sigma_{1}}, \cdots, A_{\sigma_{k}}$ with $\sigma=\sigma_1...\sigma_k \in L(\mathcal{M})$ generated by Algorithm \ref{algorithm2} has an asymptotic growth rate $\lim_{k \rightarrow \infty}\|A_{\sigma_{1}} \cdots A_{\sigma_{k}}\|_{2}^{\frac{1}{k}}$ that can be made close to any $\gamma<\rho_{SOS,2d}(\mathcal{A}_\mathcal{M})$. And since $\lim_{d \rightarrow \infty}\rho_{SOS,2d}(\mathcal{A}_\mathcal{M}) = \rho(\mathcal{A}_\mathcal{M}) = \rho(\mathcal{A},\mathcal{M})$, the asymptotic growth rate can be made arbitrarily close to $\rho(\mathcal{A},\mathcal{M})$. 

\begin{theorem}\label{new_theorem}
Consider a  constrained switching system $S(\mathcal{A},\mathcal{M})$ where $\mathcal{A}=\{A_1, A_2, ..., A_m\} \subset \R^{n\times n}$ and $\mathcal{M}$ is a DFA. For any $d,h \in \Z_{>0}$, any $\gamma<\rho_{SOS,2d}(\mathcal{A}_\mathcal{M})$, and any $k\in \Z_{>0}$ that is a multiple of $h$,  Algorithm \ref{algorithm2} produces a switching sequence  $\sigma=\sigma_1 \sigma_{2}...\sigma_k$ such that the following inequality holds:  
\begin{align}
\lim_{k \rightarrow \infty}\left\|A_{\sigma_{1}} \cdots A_{\sigma_{k}}\right\|_{2}^{\frac{1}{k}} \geq \frac{\gamma}{m^{\frac{1}{2d}}}.
\end{align}
\end{theorem}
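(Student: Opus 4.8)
The plan is to reduce the claim to Proposition \ref{norm2} via the Kronecker structure of the lifted matrices. Exactly as in the proof of the preceding proposition, from $\Phi_{\sigma_i}=F_{\sigma_i}\otimes A_{\sigma_i}$, the mixed-product property \eqref{kron_prop1}, and the norm identity \eqref{kron_prop2}, one obtains for every $k$
\begin{equation*}
\left\|\Phi_{\sigma_{1}} \cdots \Phi_{\sigma_{k}}\right\|_{2} = \left\|F_{\sigma_{1}} \cdots F_{\sigma_{k}}\right\|_{2}\,\left\|A_{\sigma_{1}} \cdots A_{\sigma_{k}}\right\|_{2}.
\end{equation*}
Hence it suffices to control the growth rate of the $F$-product: if $\lim_{k\to\infty}\|F_{\sigma_1}\cdots F_{\sigma_k}\|_2^{1/k}=1$, then taking $k$-th roots above and letting $k\to\infty$ transfers the lower bound of Proposition \ref{norm2} from the $\Phi$-products to the $A$-products.

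The key step is therefore to show that $\|F_{\sigma_1}\cdots F_{\sigma_k}\|_2$ is bounded between two positive constants independent of $k$. On the one hand, because $f$ is a (partial) deterministic transition function, each column of every $F_j$ contains at most one nonzero entry, and that entry equals $1$; this structure is preserved under matrix multiplication, so $F_{\sigma_1}\cdots F_{\sigma_k}$ is again a $0$-$1$ matrix with at most one $1$ per column, whose spectral norm is therefore at most $\sqrt{\ell}$. On the other hand, by the preceding proposition the sequence $\sigma_1\cdots\sigma_k$ is accepted by $\mathcal{M}$, and so is each of its prefixes; hence $F_{\sigma_1}\cdots F_{\sigma_k}\neq\mathbf{0}$ by Lemma \ref{cor1}, which forces $\|F_{\sigma_1}\cdots F_{\sigma_k}\|_2\geq 1$. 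Squeezing $1\leq\|F_{\sigma_1}\cdots F_{\sigma_k}\|_2\leq\sqrt{\ell}$ and taking $k$-th roots yields $\lim_{k\to\infty}\|F_{\sigma_1}\cdots F_{\sigma_k}\|_2^{1/k}=1$.

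Combining the two steps, divide the displayed identity by $\|F_{\sigma_1}\cdots F_{\sigma_k}\|_2^{1/k}$, pass to the limit, and apply Proposition \ref{norm2} to obtain $\lim_{k\to\infty}\|A_{\sigma_1}\cdots A_{\sigma_k}\|_2^{1/k}\geq\gamma/m^{1/(2d)}$. The accompanying remark that the growth rate can be pushed arbitrarily close to $\rho(\mathcal{A},\mathcal{M})$ then follows by letting $\gamma$ approach $\rho_{SOS,2d}(\mathcal{A}_\mathcal{M})$ and $d\to\infty$, using $\lim_{d\to\infty}\rho_{SOS,2d}(\mathcal{A}_\mathcal{M})=\rho(\mathcal{A}_\mathcal{M})=\rho(\mathcal{A},\mathcal{M})$ (Lemma \ref{cjsr=jsr_theorem}).

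I expect the only delicate point to be the uniform two-sided bound on $\|F_{\sigma_1}\cdots F_{\sigma_k}\|_2$: one must argue that the ``at most one $1$ per column'' property is closed under multiplication and that nonemptiness of the full product follows from acceptance of every prefix — which is where the preceding proposition is genuinely used. Two minor bookkeeping matters, namely confirming that the limit in Proposition \ref{norm2} exists (otherwise the argument runs verbatim with $\liminf$ and $\limsup$) and tracking the reversed index order between \eqref{sigorder} and the products in Algorithm \ref{algorithm2}, are routine.
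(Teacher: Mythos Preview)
Your proof is correct and follows essentially the same route as the paper's: decompose $\|\Phi_{\sigma_1}\cdots\Phi_{\sigma_k}\|_2$ via the Kronecker structure, bound $\|F_{\sigma_1}\cdots F_{\sigma_k}\|_2$ using the ``at most one $1$ per column'' property, and invoke Proposition~\ref{norm2}. If anything, you are more careful than the paper, which asserts $\lim_{k\to\infty}\|F_{\sigma_1}\cdots F_{\sigma_k}\|_2^{1/k}=1$ from the upper bound $\sqrt{\ell}$ alone, whereas you explicitly supply the lower bound $\|F_{\sigma_1}\cdots F_{\sigma_k}\|_2\geq 1$ via the preceding proposition and Lemma~\ref{cor1}.
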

\begin{proof}
By the definition of transition structure matrix, each column of $F_{\sigma_i}$ has at most one ``$1$'' with all other elements being ``$0$''. Therefore, $\left\|F_{\sigma_{1}} \ldots F_{\sigma_{k}}\right\|_2 \leq \sqrt{\ell}$,
where $\ell$ is the number of states in the DFA $\mathcal{M}$. It implies that
\begin{equation*}
\lim _{k \rightarrow \infty}\left\|F_{\sigma_{1}} \ldots F_{\sigma_{k}}\right\|_2^{\frac{1}{k}} = 1
\end{equation*}
Therefore, by (\ref{lim_phi}), we have
\begin{equation*}
\lim _{k \rightarrow \infty}\left\|A_{\sigma_{1}} \cdots A_{\sigma_{k}}\right\|_2^{\frac{1}{k}} = \lim _{k \rightarrow \infty} \left\|\Phi_{\sigma_{1}} \cdots \Phi_{\sigma_{k}}\right\|_{2}^{\frac{1}{k}} \geq \frac{\gamma}{m^{\frac{1}{2d}}}
\end{equation*}
where the last inequality is from Proposition \ref{norm2}. This completes the proof.
\end{proof}


In practice, switching sequences generated by Algorithm \ref{algorithm2} are periodic after some steps. Therefore, instead of generating sequences of the maximum length  by using Algorithm \ref{algorithm2}, we can compute the average spectral radius for all cycles with length smaller than a given length, and choose the cycle corresponding to the largest average spectral radius. Formally, 
$
\rho_{T}(\mathcal{A},\mathcal{M}) \leq \rho(\mathcal{A},\mathcal{M})
$
where 
$
\rho_{T}(\mathcal{A},\mathcal{M}) = \text{max}\{\rho(A_{c_T}...A_{c_1})^{1/T}: c=c_1 c_2\dots c_T \in L(\mathcal{M}), c \text{ is a cycle}\}.
$
The spectral radius of matrices corresponding to the cycle $c$ provides a lower bound for $\rho(\mathcal{A},\mathcal{M})$. 

\subsection{Leveraging Other Existing Algorithms}\label{other_algorithms}


Algorithm \ref{algorithm2}, Proposition \ref{norm2}, and Theorem \ref{new_theorem} are all based on the lift-based framework that establishes the equivalence between $S(\mathcal{A},{\mathcal{M}})$ and  $S(\mathcal{A}_{\mathcal{M}})$. In this subsection, we will show that this equivalence also allows us to leverage some existing algorithms for arbitrary switching systems to generate high-growth sequences for constrained switching systems. Numerical examples will be shown in subsection  \ref{sec:example}.

We summarize the Gripenberg algorithm (see, e.g.,  \cite{gripenberg1996computing}) that applies to the lifted system $S(\mathcal{A}_{\mathcal{M}})$ in Algorithm \ref{grip}, as an example to show that our lift-based method is not intended for a specific algorithm but is applicable for a type of algorithms. A theoretical result on the asymptotic growth rate similar to Theorem \ref{new_theorem} can be also obtained, which is omitted for the space limitation.  Proposition \ref{grip_prop} shows that the sequences generated by Algorithm \ref{grip} are $\mathcal{M}$-accepted.  

Recall that ${\mathcal{A}_\mathcal{M}} =  \{\Phi_1,\dots,\Phi_m\} \subset \mathbb{R}^{n\ell\times n\ell}$ where $\Phi_i$ is defined in (\ref{Phii}). The $n$-ary Cartesian power of the set ${\mathcal{A}_\mathcal{M}}$ is defined as 
$$
{\mathcal{A}_\mathcal{M}^n} = \underbrace{\mathcal{A}_\mathcal{M} \times \mathcal{A}_\mathcal{M} \times \cdots \times \mathcal{A}_\mathcal{M}}_{n}.
$$
Given a $t$-tuple $X = ( \Phi_{\sigma_1},\Phi_{\sigma_2},...,\Phi_{\sigma_t} )\in {\mathcal{A}_\mathcal{M}^t}$, we denote $\Pi(X) = \prod_{i = 1}^{t} \Phi_{\sigma_i}$ and $d(X) = \min_{1\leq j\leq t}\|\prod_{i=1}^j \Phi_{\sigma_i}\|^{1/j}$.

\setlength{\topmargin}{-20pt}
\begin{algorithm}
\SetAlgoLined
\KwIn {a set of matrices ${\mathcal{A}_\mathcal{M}} = \{\Phi_1,\dots,\Phi_m\}$, a non-negative scalar  $\epsilon \geq 0$ and a positive integer $t$}
\KwOut {a $k$-tuple of matrices ($k\leq t$): $\varphi_t = (\Phi_{\sigma_{1}}, \cdots, \Phi_{\sigma_{k}})$}
Set
$
T_1 = {\mathcal{A}_\mathcal{M}},\ \alpha_1 = \max _{\Phi \in {\mathcal{A}_\mathcal{M}}}\rho(\Phi)
$

Set $\varphi_1 = \text{arg}\max _{\Phi \in {\mathcal{A}_\mathcal{M}}}\rho(\Phi) $

\For {$i=2,3, \ldots,t$}{
$
T_i = \{(X,\Phi) \in {\mathcal{A}_\mathcal{M}^i}\ |\ X\in T_{i-1},\ \Phi \in {\mathcal{A}_\mathcal{M}},$\\
$\quad\quad\quad d(X,\Phi) > \alpha_{i-1} + \epsilon \}$\\
$\alpha_i = \max\{\alpha_{i-1},\ \max_{Y\in T_i} \rho(\Pi(Y))^{1/i}\}
$

\eIf{$\alpha_{i-1} \geq \max_{Y\in T_i} \rho(\Pi(Y))^{1/i}$}{
$\varphi_i =\varphi_{i-1}$}{
$\varphi_i = \text{arg} \max_{Y\in T_i}\rho(\Pi(Y))^{1/i} $}

}
 \caption{(Gripenberg Algorithm \cite{gripenberg1996computing}) Generate a  sequence of matrices $\Phi_{\sigma_{1}}, \cdots, \Phi_{\sigma_{k}}$ with a high growth rate.}
 \label{grip}
\end{algorithm}
\begin{proposition}\label{grip_prop}
Consider a set of matrices ${\mathcal{A}_\mathcal{M}} = \{\Phi_1,\dots,\Phi_m\}$ such that $\max _{\Phi \in {\mathcal{A}_\mathcal{M}}}\rho(\Phi) >0$. The sequence $\sigma=\sigma_1...\sigma_k$ corresponding to the $k$-tuple of matrices $(\Phi_{\sigma_{1}}, \cdots, \Phi_{\sigma_{k}})$ produced by Algorithm \ref{grip}  is  accepted by the DFA $\mathcal{M}$, i.e., $\sigma \in L(\mathcal{M})$.
\end{proposition}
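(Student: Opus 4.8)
The plan is to argue by contradiction, mirroring the proof given for Algorithm \ref{algorithm2}. Suppose the sequence $\sigma = \sigma_1 \cdots \sigma_k$ corresponding to the output tuple $\varphi_t = (\Phi_{\sigma_1}, \dots, \Phi_{\sigma_k})$ is not $\mathcal{M}$-accepted. By Lemma \ref{cor1}, this means $F_{\sigma_k} \cdots F_{\sigma_1} = \mathbf{0}$, and therefore (equivalently, since each $F_{\sigma_i}$ is a $0$--$1$ matrix with at most one $1$ per column, the product in either order is zero exactly when the other is) $F_{\sigma_1} \cdots F_{\sigma_k} = \mathbf{0}$. Using the Kronecker-product identity \eqref{kron_prop1} exactly as in the proof of Proposition 2, we get $\Phi_{\sigma_1} \cdots \Phi_{\sigma_k} = (F_{\sigma_1} \cdots F_{\sigma_k}) \otimes (A_{\sigma_1} \cdots A_{\sigma_k}) = \mathbf{0}$, hence $\Pi(\varphi_t) = \prod_{i=1}^k \Phi_{\sigma_i} = \mathbf{0}$ (up to the reversal of index order, which does not affect whether the product vanishes) and so $\rho(\Pi(\varphi_t)) = 0$.

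Next I would show this forces a contradiction with how Algorithm \ref{grip} selects $\varphi_t$. The key observation is that the algorithm maintains $\alpha_i = \max\{\alpha_{i-1}, \max_{Y \in T_i} \rho(\Pi(Y))^{1/i}\}$, so $\alpha_1 \le \alpha_2 \le \cdots \le \alpha_t$ is nondecreasing, and by the hypothesis $\max_{\Phi \in \mathcal{A}_\mathcal{M}} \rho(\Phi) > 0$ we have $\alpha_1 > 0$, hence $\alpha_i > 0$ for all $i$. I would then verify the invariant that $\varphi_i$ always satisfies $\rho(\Pi(\varphi_i))^{1/|\varphi_i|} = \alpha_i$ (it is a ``champion'' realizing the current best spectral-radius-rate): this holds for $i=1$ by construction, and is preserved by the if/else update since if $\varphi_i = \varphi_{i-1}$ then $\alpha_i = \alpha_{i-1} = \rho(\Pi(\varphi_{i-1}))^{1/|\varphi_{i-1}|}$, while if $\varphi_i = \arg\max_{Y \in T_i} \rho(\Pi(Y))^{1/i}$ then $\rho(\Pi(\varphi_i))^{1/i} = \max_{Y\in T_i}\rho(\Pi(Y))^{1/i} = \alpha_i$ (in this branch it exceeds $\alpha_{i-1}$). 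In either case $\rho(\Pi(\varphi_t))^{1/|\varphi_t|} = \alpha_t \ge \alpha_1 > 0$, so $\rho(\Pi(\varphi_t)) > 0$, contradicting $\rho(\Pi(\varphi_t)) = 0$ derived above. Therefore $\sigma \in L(\mathcal{M})$.

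The main obstacle, and the only place where care is genuinely needed, is pinning down that $\varphi_t$ really does realize a strictly positive spectral radius — i.e., establishing the champion invariant cleanly and checking the edge case where $T_i$ becomes empty for some $i \le t$. If $T_i = \emptyset$ at some step, the line $\alpha_i = \max\{\alpha_{i-1}, \max_{Y \in T_i}(\cdots)\}$ should be read with the convention that the max over the empty set is $-\infty$ (or simply omitted), so $\alpha_i = \alpha_{i-1}$ and the else-branch does not fire, giving $\varphi_i = \varphi_{i-1}$; one should note that once $T_i$ is empty it stays empty, so $\varphi_t = \varphi_{i-1}$ for the last $i$ with $T_i \neq \emptyset$, and the invariant still propagates. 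A secondary minor point is the index-order reversal between the tuple notation $\Pi(X) = \prod_{i=1}^t \Phi_{\sigma_i}$ used in Algorithm \ref{grip} and the convention \eqref{sigorder}; since we only ever use the fact that a Kronecker product vanishes iff one factor vanishes, and that $\rho(\cdot)$ of a product is order-insensitive for our purposes only through its being zero or nonzero, this reversal is immaterial to the argument and can be dispatched in one sentence.
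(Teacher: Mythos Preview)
Your approach is essentially the paper's: argue by contradiction, use Lemma~\ref{cor1} together with the mixed-product rule \eqref{kron_prop1} to conclude $\Pi(\varphi_t)=\mathbf{0}$, and then contradict the monotonicity $\alpha_1\le\cdots\le\alpha_t$ with $\alpha_1>0$. Your explicit champion invariant $\rho(\Pi(\varphi_i))^{1/|\varphi_i|}=\alpha_i$ and your handling of the $T_i=\emptyset$ edge case are a bit more careful than the paper, which simply asserts $\alpha_k=\rho(\Pi(\varphi_k))^{1/k}=0$ (implicitly using that a $k$-tuple output must have been set in the else-branch at iteration $k$).

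One correction, though: your parenthetical justification that ``the product in either order is zero exactly when the other is'' for the DFA matrices $F_{\sigma_i}$ is \emph{false}. A two-state counterexample is $F_1=\left(\begin{smallmatrix}0&1\\0&0\end{smallmatrix}\right)$, $F_2=\left(\begin{smallmatrix}1&0\\0&0\end{smallmatrix}\right)$, where $F_1F_2=\mathbf{0}$ but $F_2F_1\neq\mathbf{0}$. The paper does not attempt to equate the two orders; it simply writes $F_{\sigma_1}\cdots F_{\sigma_k}=\mathbf{0}$ directly (in the order matching the algorithm's $\Pi$), which amounts to applying Lemma~\ref{cor1} to the reversed word $\sigma_k\cdots\sigma_1$. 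You should do the same rather than claim the two vanishing conditions coincide; once you drop that claim, your proof goes through and matches the paper's.
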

\begin{proof}
We prove the result by contradiction. 
Assume that the sequence $\sigma=\sigma_1...\sigma_k$ corresponding to the $k$-tuple of matrices $(\Phi_{\sigma_{1}}, \cdots, \Phi_{\sigma_{k}})$ produced by Algorithm \ref{grip} can not be accepted by $\mathcal{M}$, i.e. $\sigma \notin L(\mathcal{M})$. Then, from Lemma \ref{cor1}, we know $F_{\sigma_{1}}\dots F_{\sigma_k} = {\bf 0}$.

Using the property in (\ref{kron_prop1}), we have
\begin{equation*}
\begin{aligned}
\Phi_{\sigma_{1}} \cdots \Phi_{\sigma_{k}} &=\left(F_{\sigma_{1}} \otimes A_{\sigma_{1}}\right)\left(F_{\sigma_{2}} \otimes A_{\sigma_{2}}\right) \ldots\left(F_{\sigma_{k}} \otimes A_{\sigma_{k}}\right) \\
&=\left(F_{\sigma_{1}} F_{\sigma_{2}} \ldots F_{\sigma_{k}}\right) \otimes\left(A_{\sigma_{1}} A_{\sigma_{2}} \cdots A_{\sigma_{k}}\right) \\
&= \bf{0}.
\end{aligned}
\end{equation*}
Since the output of Algorithm \ref{grip} is $(\Phi_{\sigma_1},...,\Phi_{\sigma_k})$, we have $\varphi_k = \text{arg} \max_{Y\in T_k}\rho(\Pi(Y))^{1/k} = (\Phi_{\sigma_1},...,\Phi_{\sigma_k})$, therefore,
\begin{equation}\label{alpha}
    \alpha_k =  \max_{Y\in T_k}\rho(\Pi(Y))^{1/k} = \rho(\Phi_{\sigma_1}\cdots \Phi_{\sigma_k})^{1/k} = 0.
\end{equation}
Thus,
$\alpha_1 \leq  \alpha_2 \leq ... \leq \alpha_{k} = 0.$ 
However, 
$
\alpha_1 = \max _{\Phi \in P}\rho(\Phi) >0.
$ 
Therefore, the initial assumption $\sigma \not\in L(\mathcal{M})$ is false, which completes the proof.
\end{proof}

Proposition \ref{grip_prop} shows that the sequence generated by Algorithm \ref{grip} for the lifted arbitrary switching system $S(\mathcal{A}_\mathcal{M})$ must also be an $\mathcal{M}$-accepted sequence for the original constrained switching system $S(\mathcal{A},\mathcal{M})$. 

\begin{remark}
Note that the finite sequence of matrices $\Phi_{\sigma_{1}}, \cdots, \Phi_{\sigma_{k}}$ produced by the Conitope algorithm also satisfies $\rho(\Phi_{\sigma_1}...\Phi_{\sigma_k}) > 0$ \cite{jungers2014lifted}. Therefore, a similar conclusion can be drawn that the switching sequence $\sigma=\sigma_1...\sigma_k$ produced by the Conitope algorithm on the lifted arbitrary switching system $S(\mathcal{A}_\mathcal{M})$ is accepted by the DFA $\mathcal{M}$.

Comparing Algorithm \ref{algorithm2} and Algorithm \ref{grip}, a common condition making them applicable for our lift-based method is that they both define a non-decreasing term for increasing lengths of generated sequences (e.g. $\theta_k$ in (\ref{theta}) and $\alpha_k$ in (\ref{alpha})). Therefore, besides the Gripenberg Algorithm and the Conitope Algorithm mentioned above, our lift-based method can be naturally extended to a type of existing algorithms designed for arbitrary switching systems as long as they can define a similar term which remains large for increasing growth rate of the generated sequences. 
\end{remark}

\subsection{Numerical Examples}\label{sec:example}

In \cite{legat2020certifying}, an algorithm is proposed for generating a sequence of matrices with a high asymptotic growth rate based on a dual SOS program  for $S(\mathcal{A},\mathcal{M})$. In this subsection, we will use several numerical examples to illustrate the effectiveness of the lift-based method proposed in this work and its comparisons with the method in  \cite{legat2020certifying}.

\begin{example}\label{exp_1}
Consider the constrained switching system $S(\mathcal{A},\mathcal{M})$ where the set $\mathcal{A}$ and the DFA $\mathcal{M}$ are given in Example \ref{exp:0}. We apply Algorithm \ref{algorithm2} to the lifted system $S(\mathcal{A}_{\mathcal{M}})$ to generate a switching sequence of a given length and look through all the circles in the sequence. The optimal cycle found depends on the initial choice of $p_0(x)$, but most of the time Algorithm \ref{algorithm2} with $2d = 2,h=3$ finds the following $\mathcal{M}$-accepted cycle:
\begin{equation}\label{cycle1}
1,1,2,1,2,3,1,1 \text{\quad or\quad} 2,1,2,3,1,1,1,1
\end{equation}
whose 8th roots of the corresponding spectral radius are both $0.97481720$. Note that this value is equal to the lower bound of $\rho(\mathcal{A}_\mathcal{M})$ given in (\ref{equ:jsr_bounds}). Also note that the two cycles shown in \eqref{cycle1} are essentially the same since the first cycle coincides with the second one if the beginning labels $``1,1"$ of the first cycle are moved to the end.

%
%

Example 3.18 in \cite{legat2020certifying} considers the same constrained switching system  $S(\mathcal{A},\mathcal{M})$ as above. By using a dual SOS program for $S(\mathcal{A},\mathcal{M})$, the following $\mathcal{M}$-accepted cycle is produced:
\begin{equation}\label{cycle20}
(3,1,2),(1,3,1),(3,1,2),(1,2,3),(2,3,1),(3,3,1)^{3}
\end{equation}
where the triplet $(u,v,w)$ denotes the edge between node $u$ and node $v$ with label $w$ in the automaton $\mathcal{M}$, and $``3"$ in the exponent means that the edges is taken $3$ times. 

The spectral radius corresponding to the cycle \eqref{cycle20} is the same as that of cycle \eqref{cycle1}. 
However, the algorithm in \cite{legat2020certifying} only produces one unique path that is accepted by $\mathcal{M}$ since all the starting nodes and ending nodes have been specified; in comparison, Algorithm \ref{algorithm2} in this work generates \emph{a set of} $\mathcal{M}$-accepted paths that have the same order of edge labels as Algorithm \ref{algorithm2} only specifies the labels of edges - this salient feature is because the constrained switching system is lifted into an associated arbitrary switching system. 
For example, the cycle \eqref{cycle1} may correspond to other path of the automaton $\mathcal{M}$ such as 
$
(2,1,2),(1,3,1),(3,1,2),(1,2,3),(2,3,1),(3,3,1)^{3}.
$
\hfill$\Box$
\end{example}



\begin{example}\label{exp:2}



Consider a constrained switching system $S(\mathcal{A},\mathcal{M})$ where the set $\mathcal{A}=\{A_1,A_2,A_3,A_4\}$ is given by
$$
\begin{array}{ll}
A_{1}=\left(\begin{array}{cc}
\ 0.55\ &  -0.69\ \\
\ 0.43\ & \ 0.25\
\end{array}\right), A_{2}=\left(\begin{array}{cc}
0.77 & 0.41 \\
-0.28 & 0.31
\end{array}\right), \\
A_{3}=\left(\begin{array}{cc}
-0.86 & -0.63 \\
-0.95 & -0.79
\end{array}\right), A_{4}=\left(\begin{array}{cc}
0.16 & 0.44 \\
-0.14 & 0.55
\end{array}\right),
\end{array}
$$
and the DFA $\mathcal{M}=(Q,U,f)$ with $Q=\{q_1,q_2,q_3,q_4\}$, $U=\{1,2,3,4\}$ and its transition map shown in Fig~\ref{fig:automaton2}. Note that this DFA is not \emph{strongly connected} since there is no path from states $q_2$ and $q_3$ to either $q_1$ or $q_4$. 
Using the JSR toolbox in \cite{vankeerberghen2014jsr}, we obtain the following bounds for $\rho(\mathcal{A}, \mathcal{M})$:
\begin{equation} \label{equ:jsr_bounds2}
0.84135421 \leq \rho(\mathcal{A},\mathcal{M}) \leq 0.84135429.
\end{equation}

\begin{figure}[!t]
    \centering
    \includegraphics[width=0.40\textwidth]{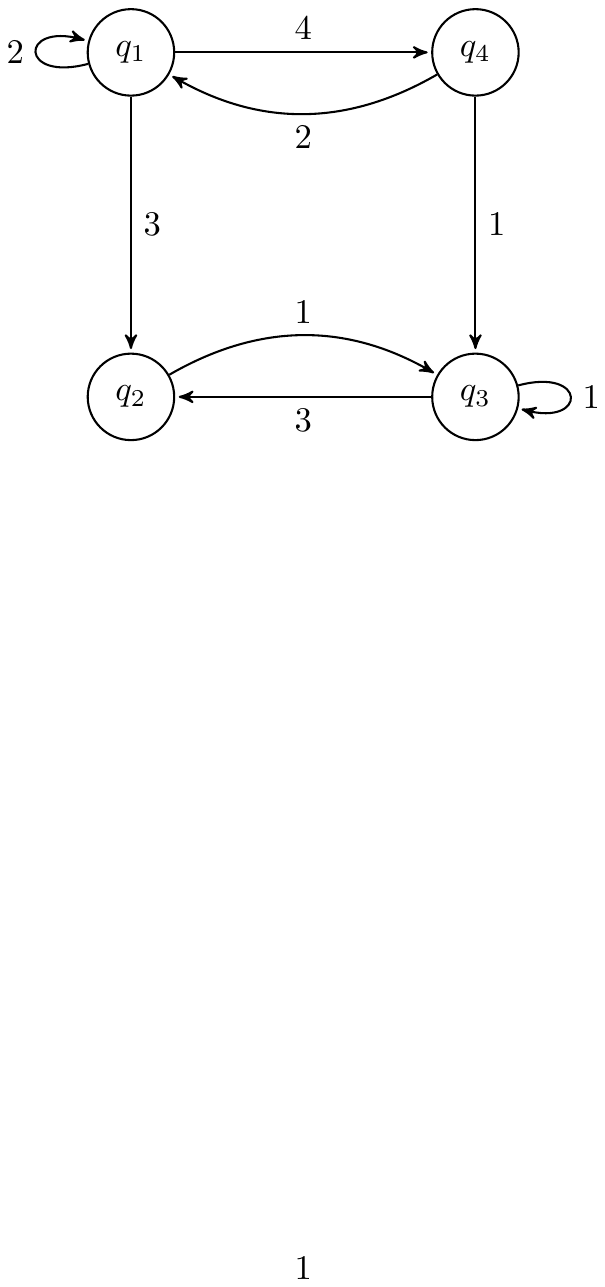}
    \caption{DFA $\mathcal{M}$ in Example~\ref{exp:2}.}
    \label{fig:automaton2}
\end{figure}

By choosing $h=1$ and $2d=2$, we run Algorithm \ref{algorithm2} 100 times by randomly  picking a polynomial $p_{0}(x) \in \operatorname{int}(\Sigma_{2 \mathrm{d}})$. Algorithm \ref{algorithm2}  \emph{always} finds the following cycle for the lifted system $S(\mathcal{A}_{\mathcal{M}})$:
\begin{equation}\label{cycle2}
3, 1, 1, 1
\end{equation}
whose 4th root of the corresponding spectral radius is equal to $0.84135421$, the lower bound shown in \eqref{equ:jsr_bounds2}. Note that the cycle \eqref{cycle2} is accepted by $L(\mathcal{M})$. Therefore, the asymptotic growth rate $\lim_{k \rightarrow \infty}\|(A_{3} A_{1}^3)^k\|_{2}^{\frac{1}{4k}}$ is equal to $0.84135421$.

As a comparison, we run the algorithm in \cite{legat2020certifying} 100 times to produce a sequence of matrices with a high asymptotic growth rate for the same $S(\mathcal{A},\mathcal{M})$. The success rate that it finds the same growth rate sequence as (\ref{cycle2}) remains around $50\%$ even if we we increase $2d$, the degree of the homogeneous polynomials, to 20 and increase $h$, the algorithm ``horizon'',  to 5. \hfill$\Box$

\end{example}

\begin{example}\label{exp:3}
Consider another constrained switching system $S(\mathcal{A},\mathcal{M})$ where the set $\mathcal{A}=\{A_1,A_2,A_3,A_4\}$ is 
$$
\begin{array}{ll}
A_{1}=\left(\begin{array}{cc}
\ 0.87\ & 0.48 \\
\ -0.79\ & -0.31
\end{array}\right), A_{2}=\left(\begin{array}{cc}
-0.29 &\ 0.45\ \\
-0.34 &\ -0.64\
\end{array}\right), \\
A_{3}=\left(\begin{array}{cc}
-0.77 & -0.88 \\
0.90 & 0.21
\end{array}\right), A_{4}=\left(\begin{array}{cc}
-0.79 & 0.38 \\
-0.67 & -0.94
\end{array}\right),
\end{array}
$$
and the DFA is $\mathcal{M}=(Q,U,f)$ with $Q=\{q_1,q_2,q_3,q_4\}$, $U=\{1,2,3,4\}$ and its transition map as shown in Fig~\ref{fig:automaton3}. 
Using the JSR toolbox in \cite{vankeerberghen2014jsr}, we obtain the following bounds for $\rho(\mathcal{A}, \mathcal{M})$: 
\begin{equation} \label{equ:jsr_bounds3}
1.03337866 \leq \rho(\mathcal{A},\mathcal{M}) \leq 1.03337876.
\end{equation}

\begin{figure}[ht]
    \centering
    \includegraphics[width=0.40\textwidth]{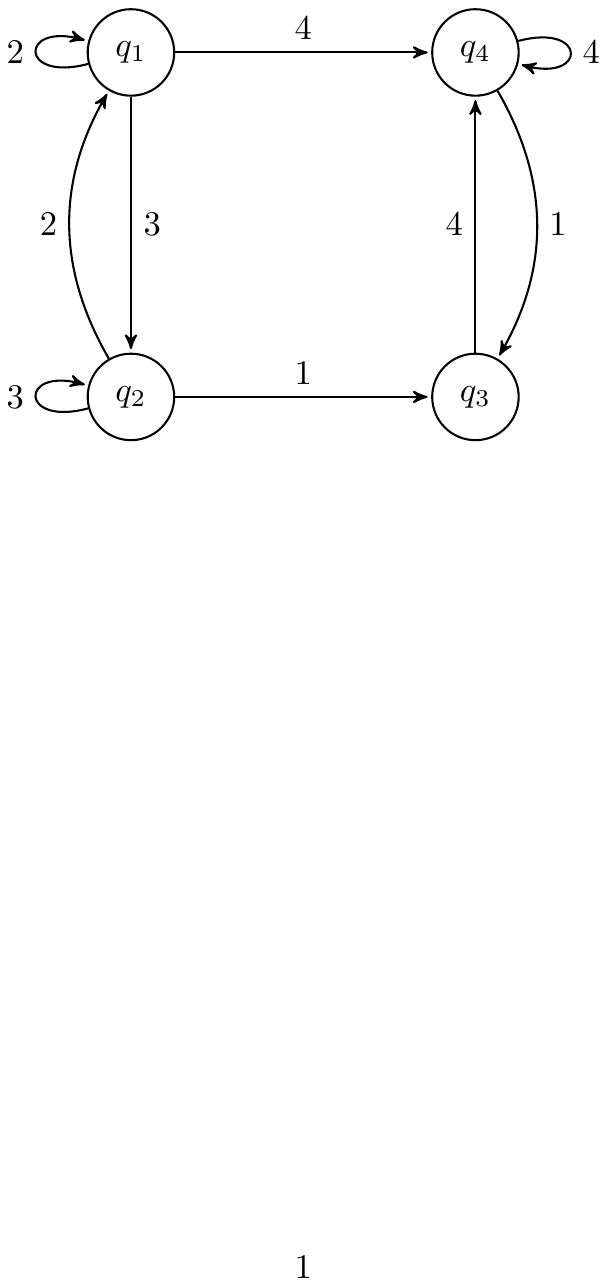}
    \caption{DFA $\mathcal{M}$ in Example~\ref{exp:3}.}
    \label{fig:automaton3}
\end{figure}

To evaluate the performance of  Algorithm \ref{algorithm2} proposed in this work and the algorithm in \cite{legat2020certifying}, 
we run both algorithms with various $d$ and $h$ for 100 times separately. 
The $\mathcal{M}$-accepted cycle with the largest average spectral radius found among all the tests is  
\begin{align}
    4,1,4\label{cycleex4}
\end{align}
whose 3rd root of the corresponding spectral radius is $1.03337866$, the lower bound shown in \eqref{equ:jsr_bounds3}. The success rate that our Algorithm \ref{algorithm2} generates cycle \eqref{cycleex4} is $100\%$ for all the cases. However, the algorithm in \cite{legat2020certifying} finds the same cycle with around $50\%$ success rate even when $2d = 20, h = 5$ . The average computation time for each run of the proposed Algorithm \ref{algorithm2} and the algorithm in \cite{legat2020certifying} are comparable; however, it is evident that our algorithm has a much higher success rate in generating $\mathcal{M}$-accepted  cycles. \hfill$\Box$
\end{example}



Table \ref{table1} summarizes the success rate and computation time of different algorithms to generate high asymptotic growth rate sequences for the same constrained switching systems. Comparing with the algorithm in \cite{legat2020certifying}, it's clear that Algorithm 1 in this paper has higher success rates to find the $\mathcal{M}$-accepted switching cycles for both example $\ref{exp:2}$ and example $\ref{exp:3}$ in which the constraining DFAs are not strongly connected and both can be separated into two strongly connected components. The $50\%$ success rate of the algorithm in \cite{legat2020certifying} comes from the fact that it randomly selects an initial state which has half chance to be in either one of the two strongly connected components. Our algorithm, however, generates the high-growth sequence on the lifted arbitrary system $S(\mathcal{A}_\mathcal{M})$ which can be seen as a particular case when the automaton has only one state and $m$ self-loops. Therefore, it has a better result when dealing with the switching system whose constraining DFA is not strongly connected, especially when there are a large number of strongly connected components.

\setlength{\topmargin}{-17pt}


\begin{table*}[t]
    \begin{center}
    \begin{tabular}{|c|c|c|c|cccc|}
    \hline Example & cycle length & Gripenberg Alg + $S(\mathcal{A}_{\mathcal{M}})$ & Conitope Alg + $S(\mathcal{A}_{\mathcal{M}})$ & $d$ & $h$ & \  Algorithm \ref{algorithm2} & \cite{legat2020certifying}\\
    \hline 2 & 8 & \makecell*[c]{100\%\\ (0.146s)} & \makecell*[c]{100\%\\ (12.2s)} & 1 & 3 & \makecell*[c]{92\%\\ (0.152s)} & \makecell*[c]{100\%\\ (0.084s)} \\
    & & & & 2 & 3 & \makecell*[c]{97\%\\ (0.543s)} & \makecell*[c]{100\%\\ (0.090s)} \\
    \hline 3 & 4 & \makecell*[c]{100\%\\ (0.025s)} & \makecell*[c]{100\%\\ (2.277s)} & 1 & 1 & \makecell*[c]{100\%\\ (0.143s)} & \makecell*[c]{52\%\\ (0.094s)} \\
    & & & & 2 & 3 & \makecell*[c]{100\%\\ (0.536s)} & \makecell*[c]{54\%\\ (0.096s)} \\
    & & & & 10 & 5 &  & \makecell*[c]{49\%\\ (0.085s)} \\
    \hline  
    4 & 3 & \makecell*[c]{100\%\\ (0.098s)} & \makecell*[c]{100\%\\ (1.637s)} & 1 & 2 & \makecell*[c]{100\%\\ (0.146s)} & \makecell*[c]{47\%\\ (0.110s)} \\
    & & & & 2 & 3 & \makecell*[c]{100\%\\ (0.537s)} & \makecell*[c]{55\%\\ (0.101s)} \\
     &   &   &   & 10 & 5 & & \makecell*[c]{50\%\\ (0.117s)} \\
    \hline
    \end{tabular}
    \end{center}
    \caption{Performance comparison of different algorithms where each algorithm is run 100 times for each example. The percentages show the success rate of each algorithm finding the highest growth rate sequences, and the average computation times (on a computer with 3.7 GHz CPU and 32GB memory) are included in the parentheses under each percentage. The second column shows the length of the highest growth rate sequences found for each example. The third and fourth columns show the success rates by applying the Gripenberg Algorithm \cite{gripenberg1996computing} and the Conitope Algorithm \cite{jungers2014lifted} on our lifted arbitrary switching system $S(\mathcal{A}_{\mathcal{M}})$ respectively. The last column compares the success rate and computation time of Algorithm 1 proposed in this work and the algorithm in \cite{legat2020certifying}.}
    \label{table1}
\end{table*}

\section{Conclusion}\label{sec:conclusion}

In this paper, we proposed a novel algorithm, based on the matrix-form and lift-based expression of the constrained switching system $S(\mathcal{A},\mathcal{M})$ and the dual solution of the SOS approximation, to generate a sequence of matrices with a high asymptotic growth rate. We proved that the high asymptotic growth rate sequence generated by our algorithm for the lifted arbitrary switching system $S(\mathcal{A}_{\mathcal{M}})$ is an equivalently satisfying sequence for the original constrained switching system $S(\mathcal{A},{\mathcal{M}})$. We showed that the proposed lift-based method is applicable to a type of existing algorithms. We also showed that compared with some existing algorithms, the lift-based algorithm has a higher success rate of generating a sequence of matrices with a large asymptotic growth rate in several cases. In future work, we plan to develop more efficient algorithms on the closeness of the asymptotic growth rate to $\rho(\mathcal{A},\mathcal{M})$ by incorporating the proposed lifting method and other existing JSR approximation algorithms.


\bibliographystyle{IEEEtran}
\bibliography{./CDC21}
\end{document}